\numberwithin{equation}{section}
\newtheorem{theorem}{Theorem}[section]
\newtheorem{lemma}[theorem]{Lemma}
\newtheorem{corollary}[theorem]{Corollary}
\begin{document}
\title[Engel graphs]{On the diameter of Engel graphs}
\author[A. Lucchini]{Andrea Lucchini}
\address{Andrea Lucchini, Dipartimento di Matematica \lq\lq Tullio Levi-Civita\rq\rq,\newline
 University of Padova, Via Trieste 53, 35121 Padova, Italy} 
\email{lucchini@math.unipd.it}
         
\author[P. Spiga]{Pablo Spiga}
\address{Pablo Spiga, Dipartimento di Matematica e Applicazioni,\newline
 University of Milano-Bicocca, Via Cozzi 55, 20126 Milano, Italy} 
\email{pablo.spiga@unimib.it}
\subjclass[2010]{primary 20F99, 05C25}
\keywords{commuting graph; prime graph; Engel elements; Engel graph}        
\thanks{The authors are members of the GNSAGA INdAM research group and kindly acknowledge their support.}
	\maketitle

        \begin{abstract}
Given a finite group $G$, the Engel graph of $G$ is a directed graph $\Gamma(G)$ encoding pairs of elements satisfying some
Engel word. Namely, $\Gamma(G)$ is the directed graph, where the vertices are the non-hypercentral elements of $G$ and
where there is an arc from $x$ to $y$ if and only if $[x,_ n y] = 1$ for some
$n \in \mathbb{N}$. From previous work, it is known that, except for a few exceptions, $\Gamma(G)$ is strongly connected. In this paper, we give an absolute upper bound on the diameter of $\Gamma(G)$, when $\Gamma(G)$ is strongly connected.
        	          \end{abstract}

\section{Introduction}\label{sec:introduction}
In this paper we investigate a directed graph introduced by Peter Cameron in~\cite[Section 11.1]{cam}. Let $x$ and $y$ be free generators of a free group of rank $2$. We let $[x,y]:=x^{-1}y^{-1}xy$.  We define recursively $[x,_0y]:=x$ and $[x,_{i+1}y]:=[[x,_{i}y],y],$ for every $i\ge 0$. The word $[x,_ny]$ is the \textit{\textbf{$n^{\mathrm{th}}$ Engel word}}.  

Now, let $G$ be a group and let  $I_n(G)=\{x\in G\mid [x,_ny]=[y,_nx]=1, {\text { for every $y\in G$}}\}$ be  the set of elements of $G$ that are  right and left $n$-Engel. The $n^{\mathrm{th}}$ \textit{\textbf{Engel graph}} $$\Gamma_{n}(G)$$ of $G$ is the directed graph having vertex set $G\setminus I_n(G)$, where $(x,y)$ is declared to be an arc if and only  if $[x,_ny]=1$. Clearly, when $n:=1$, $I_1(G)$ is the center ${\bf Z}(G)$ of $G$ and $\Gamma_1(G)$ is the \textbf{\textit{commuting graph}} of $G$.

A directed graph is \textit{\textbf{strongly connected}} if, for any two vertices, there exists a directed path from the first to the second. A directed graph is \textit{\textbf{connected}} if, for any two vertices, there exists a (not necessarily  directed) path from the first to the second.   The commuting graph is undirected because the commutator word is symmetric in $x$ and $y$, but in general $\Gamma_n(G)$ is genuinely directed. The \textit{\textbf{distance}} between two vertices in a directed graph is the minimum length of a directed path from the first vertex to the second; the \textit{\textbf{diameter}} is the maximum distance between all vertices in the directed graph. 

 There is a natural reason for excluding  the elements of $I_n(G)$ from the vertex set of $\Gamma_n(G)$: if we include elements of $I_n(G)$ in the vertex set, then we define a graph which is trivially strongly connected, because any element of $I_n(G)$ is adjacent to every other vertex. Observe that, for every $n$, $\Gamma_n(G)$ is a subgraph of $\Gamma_{n+1}(G)$;  this means that the family of graphs $(\Gamma_n(G))_n$ becomes denser as $n$ increases.

 We are  interested in a ``cumulative'' version of $\Gamma_n(G)$. Let $1={\bf Z}_0(G)\le {\bf Z}_1(G)\le{\bf Z}_2(G)\le \cdots $ be the series of subgroups of $G$, where ${\bf Z}_{n+1}(G)/{\bf Z}_n(G)={\bf Z}(G/{\bf Z}_n(G))$. The subgroup $${\bf Z}_\infty(G):=\bigcup_{n\ge 0}{\bf Z}_n(G)$$ is called the \textit{\textbf{hypercenter}} of $G$. We let $\Gamma(G)$ be the directed graph having vertex set $G\setminus{\bf Z}_\infty(G)$ and where $(x,y)$ is an arc of $\Gamma(G)$ if and only if $[x,_ny]=1$, for some positive integer $n$.  It is shown  in~\cite[Introduction]{DLN} that the hypercenter ${\bf Z}_\infty(G)$ is the set of elements of $G$ with the property that, for every $y\in G$, there exists $n\in\mathbb{N}$ with $[x,_ny]=[y,_nx]=1$. As above, this is the reason for excluding  the elements of ${\bf Z}_\infty(G)$ from the vertex set of $\Gamma(G)$.

The first investigation on Engel graphs is in~\cite{DLN}. For instance,~\cite[Theorem~1.1]{DLN} shows that, if $G$ is a finite group, then $\Gamma(G)$ is weakly connected and its undirected diameter is at most 10. However, since $\Gamma(G)$ is a directed graph, it is natural to investigate the strong connectivity and the diameter of $\Gamma(G)$. The strong connectivity of $\Gamma(G)$ is intimately related to the composition factors of $G$. Corollary~1.4 in~\cite{LS} shows that, $\Gamma(G)$ is not strongly connected if and only if one of the following cases occur:
\begin{enumerate}
\item\label{type1} $G/{\bf Z}_\infty(G)$ is a Frobenius group;
\item\label{type2} $G/{\bf Z}_\infty(G)\cong\mathrm{PSL}_2(q)$ with $q\ge 4$ even or with $q\equiv 5\pmod 8$;
\item\label{type3} $G/{\bf Z}_\infty(G)\cong {}^2B_2(q)$ with $q\ge 8$;
\item\label{type4} $G/{\bf Z}_\infty(G)\cong\mathrm{Aut({}^2B_2(2^e))}$ with $e$ an odd prime.
\end{enumerate}
A slight improvement of this result, concerned with the minimum $n$ with $\Gamma_n(G)$ strongly connected, is in~\cite{DMS}.

In the light of~\cite[Corollary~1.4]{LS} and on the results on the undirected diameter in~\cite{DLN}, in this paper we are interested on the diameter of $\Gamma(G)$. 
\begin{theorem}\label{thrm:MAIN}
Let $G$ be a finite group. If $\Gamma(G)$ is strongly connected, then the diameter of $\Gamma(G)$ is at most $16$. Moreover, if $G/{\bf Z}_\infty(G)$ is not almost simple, then the diameter of  $\Gamma(G)$ is at most $12$. 
\end{theorem}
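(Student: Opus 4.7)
The plan is to exploit the hypercentre and split according to the almost-simple dichotomy. A standard observation is that, for every $z\in\mathbf{Z}_\infty(G)$ and every $y\in G$, there exists $n$ with $[z,{}_n y]=[y,{}_n z]=1$; more is true, since if $[x,{}_n y]\in\mathbf{Z}_\infty(G)$ then iterating the commutator finitely many more times lands in $\mathbf{Z}_0(G)=1$, so the Engel condition $[x,{}_n y]=1$ for some $n$ depends only on the images $\bar x,\bar y$ in $\bar G:=G/\mathbf{Z}_\infty(G)$, and one may pass from $\Gamma(G)$ to $\Gamma(\bar G)$ at no metric cost. We may therefore assume $\mathbf{Z}_\infty(G)=1$. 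The main technical device will be a \emph{hub argument}: produce a vertex $h$ such that every vertex $v$ satisfies $\mathrm{dist}(v,h),\mathrm{dist}(h,v)\le a$, whence the diameter is at most $2a$. The key algebraic input is that every nilpotent subgroup $H\le G$ induces a bidirected clique of non-hypercentral elements in $\Gamma(G)$, because $[x,{}_c y]=1$ whenever $c$ exceeds the nilpotency class of $H$ and $x,y\in H$.

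Suppose first that $\bar G$ is not almost simple. By Corollary~1.4 of~\cite{LS}, strong connectivity together with the failure of almost simplicity forces $\bar G$ to carry either a non-trivial soluble radical or a non-simple socle; in either case $G$ possesses a non-trivial normal nilpotent subgroup $N\not\subseteq\mathbf{Z}_\infty(G)$, for instance a Sylow subgroup of the soluble radical, or a central product of centralisers of components of the socle. Fixing a generic $h\in N$ as hub, one then argues that every vertex $x$ reaches $h$, and is reached from $h$, in at most six steps in $\Gamma(G)$: the rough idea is to replace $x$ by an element normalising (or centralising) a suitable subgroup of $N\langle x\rangle$, and then exploit the nilpotent-clique structure inside $N$ to land on $h$. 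Combined with the corresponding argument for the target vertex, this gives diameter at most $12$.

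The almost-simple case carries the main difficulty. Here the absence of proper non-trivial nilpotent normal subgroups forces the hub to lie inside the socle $S$ itself. The plan is to select an anchor conjugacy class in $S$ meeting many nilpotent subgroups of $G$: involutions for the classical groups, Singer-type or regular semisimple elements for the Lie-type groups, and explicit prime-order elements with rich centralisers for the alternating and sporadic groups. Using the classification-based analyses of prime and commuting graphs of finite simple groups invoked in~\cite{LS,DMS} and the undirected diameter bound of~\cite{DLN}, one would establish that every non-hypercentral $x\in G$ reaches, and is reached from, an anchor in at most eight steps, yielding the bound $16$. The principal obstacle is achieving a single uniform constant across the low-rank classical groups, the small exceptional Lie-type groups, and a handful of low-degree alternating and sporadic cases, where the commuting graph of $S$ itself can be comparatively thin; in these cases one presumably needs either a finer structural argument---pairing the anchor with an element of prime order inside the centraliser of an appropriate maximal subgroup---or a small amount of direct computation in the lattice of maximal subgroups of $S$.
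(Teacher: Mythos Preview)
Your reduction to $\mathbf{Z}_\infty(G)=1$ is correct and matches the paper. After that, however, the argument has a concrete gap.

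In the non-almost-simple case you assert that $G$ (with trivial hypercentre) must contain a non-trivial normal nilpotent subgroup $N$. This is false. Take $G=S_1\times S_2$ with $S_1,S_2$ non-abelian simple: then $\mathbf{Z}_\infty(G)=\mathbf{F}(G)=1$, $G$ is not almost simple, and $\Gamma(G)$ is strongly connected, yet there is no such $N$. Neither of your proposed constructions works: a Sylow subgroup of the soluble radical is unavailable when the radical is trivial, and ``a central product of centralisers of components of the socle'' is in this example $S_2\times S_1$, which is not nilpotent. Even in the cases where $\mathbf{F}(G)\neq 1$, only the inbound direction $d(x,h)\le 1$ is automatic (Fitting elements are left Engel); the outbound bound $d(h,x)\le 6$ is precisely the hard content, and your sketch (``replace $x$ by an element normalising a suitable subgroup of $N\langle x\rangle$'') does not supply it. The bound $12$ in the non-almost-simple case is not a trivial consequence of a hub in $\mathbf{F}(G)$.

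The paper proceeds quite differently. Its workhorse is not a nilpotent hub but a \emph{non-nilpotent} normal subgroup $N$: if any two non-identity elements of $N$ are at distance $\le k$ in $\Delta(G)$, then $\mathrm{diam}(\Delta(G))\le k+4$ (each vertex meets $N\setminus\{1\}$ within two steps via a prime-order power and Thompson's fixed-point-free theorem). This ``$+4$'' lemma is then fed by a layered analysis: when $\mathbf{F}^*(G)\neq\mathbf{F}(G)$ one bounds $\Delta(\mathbf{F}^*(G))$ directly (central-product structure or the almost simple bound), and when $\mathbf{F}^*(G)=\mathbf{F}(G)$ one passes to $J/\mathbf{F}(G)=\mathbf{F}(G/\mathbf{F}(G))$ and $J^*/\mathbf{F}(G)=\mathbf{F}^*(G/\mathbf{F}(G))$, invoking a result of Detomi--Lucchini--Nemmi that forces $J\ne J^*$ (else $G$ is Frobenius), and then bounding $\Delta(J^*)$ by $8$ via a delicate argument about Sylow normalisers in simple groups. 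The almost simple bound of $16$ comes not from an anchor class reached in $8$ steps each way, but from the Morgan--Parker diameter bound of $10$ on connected components of the commuting graph, plus at most one Engel step on each side to enter the even-order component, plus the same ``$+4$'' to pass from the socle to the overgroup.
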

The upper bound $16$ established in our paper should not be regarded as definitive since there is a strong likelihood that it can be further improved. What is truly noteworthy is not the specific numerical value but rather the existence of such an upper bound. Indeed, rather than trying to obtain the best possible bound, our proof heavily relies on the diameter of the connected components of commuting graphs, see~\cite{mp}. Incidentally,~\cite[Theorem~1.3]{DLN} shows that, when $G$ is soluble and $\Gamma(G)$ is strongly connected (that is, $G/{\bf Z}_\infty(G)$ is not a Frobenius group), then $\Gamma(G)$ has diameter at most $4$. Furthermore, there exist soluble groups attaining the bound $4$.

\section{Notation and preliminaries}\label{not&prel}
All groups in this paper are finite, therefore, further specification is unnecessary.

 Our notation is standard, given a group $G$, we denote by ${\bf F}(G)$ the \textit{\textbf{Fitting subgroup}} of $G$ and by ${\bf F}^\ast(G)={\bf F}(G){\bf E}(G)$ the \textit{\textbf{generalized Fitting subgroup}}, where ${\bf E}(G)$ is the layer subgroup. Given $g\in G$, we denote by ${\bf o}(g)$ the order of $g$.

Given a directed graph $\Gamma$, we denote by $d_\Gamma(x,y)$ the \textit{\textbf{distance}} between the two vertices $x$ and $y$, that is, the minimum length of a directed path from $x$ to $y$. When the directed graph $\Gamma$ is clear from the context, we drop the label $\Gamma$ from $d_\Gamma$.

In what follows, we write $x\mapsto y$ to denote the arc $(x,y)$ of the Engel graph $\Gamma(G)$ and we write $x\mapsto_n y$ to denote the arc $(x,y)$ of the $n^{\mathrm{th}}$ Engel graph $\Gamma_n(G)$. 

Although we are interested in Engel graphs, our inductive proof of Theorem~\ref{thrm:MAIN} requires some auxiliary graphs. We let 
$\Lambda(G)$ be the directed graph having vertex set $G$, where $(x,y)$ is declared to be an arc if $[x,_ny]=1$, for some some $n\in\mathbb{N}$. Also, we let $\Delta(G)$ be the subgraph of $\Lambda(G)$ induced on $G\setminus\{1\}$. In particular, $\Gamma(G)$ is the subgraph of $\Lambda(G)$ induced on $G\setminus{\bf Z}_\infty(G)$. Moreover, when ${\bf Z}_\infty(G)=1$, we have $\Gamma(G)=\Delta(G)$.

Some of the results of this section are taken from~\cite{DLN}, but we have rephrased them to suit our current requirements. We do not intend to claim  originality for these results. Building upon the fundations laid by Detomi, Lucchini and Nemmi, and combining their methods with the results on almost simple groups in Section~\ref{almost simple groups}, we believe this to be the most effective strategy for proving Theorem~\ref{thrm:MAIN}.

\begin{lemma}\label{lemma21}The digraph $\Gamma(G)$ is strongly connected if and only if so is
$\Gamma(G/{\bf Z}_\infty (G))$. Moreover, $\mathrm{diam}(\Gamma(G/{\bf Z}_\infty(G)))= \mathrm{diam}(\Gamma(G))$.
\end{lemma}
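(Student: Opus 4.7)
The approach is to exhibit the natural projection $\pi: G \to \bar{G} := G/Z$ (with $Z := \mathbf{Z}_\infty(G)$) as a ``graph isomorphism up to fibres'' between $\Gamma(G)$ and $\Gamma(\bar{G})$. I would first recall that $\mathbf{Z}_\infty(\bar{G}) = 1$: since $G$ is finite the upper central series stabilizes and $Z = \mathbf{Z}_k(G)$ for some $k$, so by induction $\mathbf{Z}_j(\bar{G}) = \mathbf{Z}_{k+j}(G)/Z = 1$ for every $j \geq 0$. Hence the vertex set of $\Gamma(\bar{G})$ is $\bar{G} \setminus \{1\}$, and $\pi$ restricts to a surjection $G \setminus Z \to \bar{G} \setminus \{1\}$. (If $\bar{G} = 1$ then both graphs are empty and there is nothing to prove.)

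The heart of the argument is the following arc-correspondence: for all $x, y \in G$, there exists $n \in \mathbb{N}$ with $[x,_n y] = 1$ if and only if there exists $m \in \mathbb{N}$ with $[\bar{x},_m \bar{y}] = 1$. One direction is immediate, while for the converse $[\bar{x},_n \bar{y}] = 1$ forces $[x,_n y] \in Z = \mathbf{Z}_k(G)$, and the standard inclusion $[\mathbf{Z}_i(G), G] \subseteq \mathbf{Z}_{i-1}(G)$, iterated $k$ times, produces $[x,_{n+k} y] = 1$. When $x, y \notin Z$ this reads: $x \mapsto y$ in $\Gamma(G)$ if and only if $\bar{x} \mapsto \bar{y}$ in $\Gamma(\bar{G})$.

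The strong-connectivity equivalence follows at once by projecting or lifting directed paths, and similarly $\mathrm{diam}(\Gamma(G)) \ge \mathrm{diam}(\Gamma(\bar{G}))$ is obtained by projecting any diameter-realising path. For the reverse inequality, given $x, y \in G \setminus Z$ with $\bar{x} \ne \bar{y}$, I lift a shortest path $\bar{x} = \bar{x}_0 \mapsto \cdots \mapsto \bar{x}_\ell = \bar{y}$ by choosing any preimages of the intermediate vertices while fixing the endpoints to $x$ and $y$, giving $d_{\Gamma(G)}(x, y) \le d_{\Gamma(\bar{G})}(\bar{x}, \bar{y})$.

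The only delicate case, and the main obstacle, is a pair $x \ne y$ with $\bar{x} = \bar{y}$, whose projection contributes distance $0$ to $\mathrm{diam}(\Gamma(\bar{G}))$. I would resolve it by a direct commutator computation: writing $y = xz$ with $z \in Z$, one has $[x, y] = [x, z] \in \mathbf{Z}_{k-1}(G)$, and iterating yields $[x,_k y] = 1$, so there is always a direct arc $x \mapsto y$. Since $\mathbf{Z}_\infty(\bar{G}) = 1 \ne \bar{G}$ forces $\bar{G}$ to be non-nilpotent (hence $|\bar{G}| \ge 6$), $\Gamma(\bar{G})$ has at least two vertices and therefore diameter at least $1$, absorbing this fibre contribution.
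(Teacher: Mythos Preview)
Your proof is correct and follows essentially the same route as the paper: both hinge on the arc correspondence obtained from the fact that $\mathbf{Z}_\infty(G)=\mathbf{Z}_m(G)$ for some $m$, so that $[x,_n y]\in\mathbf{Z}_m(G)$ forces $[x,_{n+m}y]=1$. The paper's proof simply establishes this correspondence and declares the rest immediate; you make explicit the points it leaves implicit, namely that $\mathbf{Z}_\infty(\bar G)=1$, that paths lift vertex-by-vertex, and that the fibre case $\bar x=\bar y$ with $x\ne y$ contributes distance $1$, which is absorbed because $\bar G$ non-nilpotent forces $\mathrm{diam}(\Gamma(\bar G))\ge 1$. (Incidentally, your direct computation in the fibre case is unnecessary: since $\bar x=\bar y$ gives $[\bar x,_1\bar y]=1$, the arc $x\mapsto y$ already follows from the general correspondence you proved.)
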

\begin{proof} 
Let $\bar G=G/{\bf Z}_\infty(G)$. We adopt the ``bar'' notation to denote the natural homomorphism from $G$ onto $\bar G$.

Observe that, if  $(x,y)$ is an arc of $\Gamma(G)$, then $(\bar x,\bar y)$  is an arc of $\Gamma(G/{\bf Z}_\infty (G))=\Gamma(\bar G)$. Conversely, if $(\bar x,\bar y)$ is an arc of $\Gamma(\bar G)$, then $[\bar y,_n \bar x]=1$ for some $n\in \mathbb{N}$. Therefore, $[y,_nx]\in  {\bf Z}_\infty (G)$. As $G$ is finite,  ${\bf Z}_\infty (G)={\bf Z}_m (G)$, for some
$m\in \mathbb{N}$. Thus $[y,_{n+m} x] = [[y,_n x],_m x] = 1$ and hence $(x,y)$ is an arc of  $\Gamma(G)$.

The rest of the proof follows immediately from the previous paragraph.
\end{proof}

The relevance of Lemma~\ref{lemma21} in the proof of Theorem~\ref{thrm:MAIN} is clear: it allows to replace an arbitrary group $G$  with $G/{\bf Z}_\infty(G)$, which has the advantage of having trivial center.

\begin{lemma}\label{lemma32}Let $G$ be a group and let  $N$ be a non-nilpotent normal subgroup of $G$. If $d_{\Delta(G)}(y_1,y_2)\leq k$ for every pair $y_1, y_2$ of non-identity elements of $N$, then $\Delta(G)$ is strongly connected and 
$\mathrm{diam}(\Delta(G))\le k+4$. 
\end{lemma}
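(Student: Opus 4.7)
The plan is to show that every vertex of $\Delta(G)$ lies within directed distance $2$ of $N\setminus\{1\}$ in both directions, and then concatenate with the in-$N$ diameter provided by the hypothesis. The one nontrivial ingredient is Thompson's theorem that a finite group admitting a fixed-point-free automorphism of prime order is nilpotent; this is the single point at which non-nilpotence of $N$ is used.

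The key intermediate step I would prove is: for every $x\in G\setminus\{1\}$, if we fix a prime $p$ dividing ${\bf o}(x)$ and set $x':=x^{{\bf o}(x)/p}$ (an element of order exactly $p$), then $C_N(x')\ne 1$. Indeed, $N$ is $\langle x'\rangle$-invariant by normality, and if $C_N(x')=1$ then $x'$ would act on $N$ by conjugation as a fixed-point-free automorphism of prime order $p$. Thompson's theorem would then force $N$ to be nilpotent, contrary to hypothesis. So we may choose $y_x\in C_N(x')\setminus\{1\}$.

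Since $x$ and $x'$ lie in a common cyclic group, $[x,x']=1$, giving both arcs $x\mapsto x'$ and $x'\mapsto x$ in $\Delta(G)$; similarly $[x',y_x]=1$ gives both $x'\mapsto y_x$ and $y_x\mapsto x'$. As $x'$ and $y_x$ are nontrivial, they are vertices of $\Delta(G)$, and we obtain directed paths $x\mapsto x'\mapsto y_x$ and $y_x\mapsto x'\mapsto x$ of length at most $2$ (collapsing to length $1$ if $x=x'$). Hence $d_{\Delta(G)}(x,y_x)\le 2$ and $d_{\Delta(G)}(y_x,x)\le 2$.

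To finish, given two vertices $x_1,x_2$ of $\Delta(G)$, apply the above to obtain $y_{x_1},y_{x_2}\in N\setminus\{1\}$ with $d_{\Delta(G)}(x_1,y_{x_1})\le 2$ and $d_{\Delta(G)}(y_{x_2},x_2)\le 2$. The hypothesis gives $d_{\Delta(G)}(y_{x_1},y_{x_2})\le k$, and concatenating the three segments yields $d_{\Delta(G)}(x_1,x_2)\le k+4$. This shows both strong connectivity and the diameter bound. The main obstacle, such as it is, is recognizing that Thompson's fpf theorem is the right tool: without it there is no clean reason why some prime-power piece of $\langle x\rangle$ must fail to act freely on $N$; everything else is a straightforward gluing of commuting-graph arcs.
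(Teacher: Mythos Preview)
Your proof is correct and is essentially the paper's own argument: pass from $x$ to a prime-order power $x'$, use Thompson's fixed-point-free automorphism theorem (the paper cites it as \cite[Theorem~10.2.1]{10}) to find a non-identity $y\in{\bf C}_N(x')$, and concatenate the resulting length-$2$ bidirectional paths with the assumed in-$N$ bound. Your write-up is in fact slightly more careful than the paper's, which ends with ``$\mathrm{diam}(\Delta(G))\le \mathrm{diam}(\Delta(N))+4$'' rather than the stated $k+4$.
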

\begin{proof}
Let $x$ be a non-identity element of $G$ and let $\tilde x$ be an element of prime order in $\langle x\rangle$. If ${\bf C}_N(\tilde x)=1$, then the action of $\tilde x$ by conjugation on $N$ induces a fixed-point-free automorphism of $N$. By~\cite[Theorem~10.2.1]{10}, $N$ is nilpotent, which contradicts our hypothesis on $N$. Therefore, there exists $y\in {\bf C}_N(\tilde x)$ with $y\ne 1$. Clearly, $(x,\tilde x),(\tilde x,x)$, $(\tilde x,y)$ and $(y,\tilde x)$ are arcs of $\Delta(G)$. 

The paragraph above shows that, for every non-identity element $x$ of $G$, there exists $y\in N\setminus\{1\}$ with $$d_{\Delta(G)}(x,y),d_{\Delta(G)}(y,x)\le 2.$$ Therefore, $\mathrm{diam}(\Delta(G))\le \mathrm{diam}(\Delta(N))+4$. 
\end{proof}

\subsection{Almost simple groups}\label{almost simple groups}
The scope of this section is to investigate the diameter of Engel graphs of almost simple groups. Strictly speaking, our results follow from the work in~\cite{LS} investigating the strong connectivity of Engel graphs of almost simple groups. However, without going over all the reasoning in~\cite{LS}, we only give some detail.

We  first need a relation among the Engel graph, the \textit{\textbf{prime graph}} and
the \textit{\textbf{commuting graph}}. Given a finite group $G$, we denote by $\pi(G)$ the set of prime divisors of the order of $G$. More
generally, given a positive integer $n$, we denote by $\pi(n)$ the set of prime divisors of $n$. Now, the prime graph $\Pi(G)$ of
$G$ is the graph having vertex set $\pi(G)$ and where two distinct primes $r$ and $s$ are declared to be adjacent if and only if
$G$ contains an element having order divisible by $rs$. The commuting graph is the graph having vertex set $G \setminus {\bf Z}(G)$ and
where two distinct elements of $G$ are declared to be adjacent if they commute; as we mentioned in Section~\ref{sec:introduction}, in our current terminology, we may denote the commuting graph by $\Gamma_1 (G)$.

Suppose now that ${\bf Z}(G) = 1$. Observe that $\Gamma_1 (G)$ is a subgraph of the Engel graph $\Gamma_n (G)$, in particular, the connected
components of the commuting graph $\Gamma_1 (G)$ give useful information on the connected components of $\Gamma_n(G)$ and hence also of $\Gamma(G)$. Now, a key
result of Williams~\cite{Williams} (see also~\cite[Theorem~4.4]{mp}) gives a method to control the connected components of $\Gamma_1 (G)$ using
the connected components of the much simpler graph $\Pi(G)$.

\begin{theorem}[Theorem~4.4,~\cite{mp}]\label{thrm21} Let $G$ be a non-soluble group with ${\bf Z}(G) = 1$, let $\Psi$ be a connected component of the
commuting graph of $G$ and let $\psi$ be the set of prime divisors of the elements of $\Psi$. If $2\notin\psi$, then $G$ has an abelian Hall $\psi$-subgroup $H$ which is isolated in
the commuting graph of $G$ and $\Psi = H \setminus \{1\}$. In particular, $\psi$ is a connected component of the prime graph of $G$.
\end{theorem}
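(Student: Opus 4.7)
The plan is to extract structure from the hypothesis $2\notin\psi$ in three stages: first show that centralizers of elements of $\Psi$ are $\psi$-subgroups of odd order (and hence solvable), next assemble a Hall $\psi$-subgroup $H$ from the Sylow subgroups for primes in $\psi$, and finally verify that $H$ is abelian and isolated with $\Psi=H\setminus\{1\}$. The starting observation is elementary but crucial: if $x\in\Psi$ and $1\ne y\in {\bf C}_G(x)$, then $x$ and $y$ commute, hence $y$ lies in the same connected component of the commuting graph, that is, $y\in\Psi$. Therefore every prime dividing $|{\bf C}_G(x)|$ belongs to $\psi$, so ${\bf C}_G(x)$ is a $\psi$-subgroup; since $2\notin\psi$ it has odd order, and by the Feit--Thompson theorem it is solvable.

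Next, for every $p\in\psi$ I would fix a Sylow $p$-subgroup $P\le G$ and pick $1\ne z\in {\bf Z}(P)$: then $z\in\Psi$ and $P\le {\bf C}_G(z)$, so $P\setminus\{1\}\subseteq\Psi$. This immediately yields the ``in particular'' clause, since for distinct $p,q\in\psi$ the connectedness of $\Psi$ supplies a path from a $p$-element to a $q$-element in which consecutive vertices commute; each such commuting pair witnesses adjacencies in the prime graph $\Pi(G)$, placing all primes of $\psi$ in a single component. To construct $H$ itself, I would work inside a solvable centralizer provided by Stage 1 and, via Hall's theorem for solvable groups together with the Sylow inclusions above, assemble a single subgroup $H$ meeting every Sylow $p$-subgroup ($p\in\psi$) in full, thereby producing a Hall $\psi$-subgroup of $G$.

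To close the argument, I need $H$ abelian, isolated in the commuting graph, and equal to $\Psi\cup\{1\}$. For any $1\ne h\in H$, Stage 1 gives that ${\bf C}_G(h)$ is a solvable $\psi$-subgroup of odd order; I would then argue that ${\bf C}_G(h)$ must embed into a Hall $\psi$-subgroup and, using the fact that $H$ already realises the full Hall $\psi$-structure around $h$, conclude ${\bf C}_G(h)\le H$. Once this holds for every non-trivial $h\in H$, we obtain $H\le {\bf Z}(H)$ (so $H$ is abelian), isolation ($H$ has no commuting-graph neighbours outside itself), and therefore $\Psi=H\setminus\{1\}$. The main obstacle is precisely this final stage: forcing ${\bf C}_G(h)$ inside $H$ itself rather than merely inside a conjugate of it requires careful control of the conjugacy class of Hall $\psi$-subgroups of $G$, which is made possible exactly by the odd-order and solvability properties supplied by Stage 1 via Feit--Thompson.
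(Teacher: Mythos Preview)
The paper does not prove this statement; it is quoted as Theorem~4.4 of Morgan--Parker~\cite{mp} (the result goes back to Williams~\cite{Williams} and the unpublished Gruenberg--Kegel theorem), so there is no in-paper argument to compare against. I therefore assess your proposal on its own merits.

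Your Stage~1 is correct and is indeed the engine of the whole result. Stage~2 needs a small repair: for $p\in\psi$ you cannot pick an \emph{arbitrary} Sylow $p$-subgroup $P$ and assert ${\bf Z}(P)\setminus\{1\}\subseteq\Psi$; a priori $p$-elements could lie in several components. You must start from some $x\in\Psi$ with $p\mid{\bf o}(x)$, pass to its $p$-part, and choose a Sylow $p$-subgroup $P$ containing that element; then ${\bf Z}(P)\setminus\{1\}\subseteq\Psi$ follows as you say.

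The genuine gap is in Stages~2--3, and you flag it yourself. ``Work inside a solvable centralizer and use Hall's theorem'' does not produce a Hall $\psi$-subgroup of $G$: a single ${\bf C}_G(x)$ need not contain a full Sylow $p$-subgroup of $G$ for every $p\in\psi$, so a Hall $\psi$-subgroup of ${\bf C}_G(x)$ is generally not a Hall $\psi$-subgroup of $G$. Likewise, ``$H$ already realises the full Hall $\psi$-structure around $h$'' is not an argument for ${\bf C}_G(h)\le H$; you correctly identify conjugacy of Hall subgroups as the obstruction, but you have not established even the \emph{existence} of Hall $\psi$-subgroups of $G$, let alone their conjugacy, since $G$ itself is non-soluble and Hall's theorems do not apply to it. The standard route (Williams, Gruenberg--Kegel) does not attempt to assemble $H$ directly from centralizers; one first proves a structural dichotomy for groups with disconnected prime graph (Frobenius or $2$-Frobenius versus a nearly simple normal section), and the isolated nilpotent Hall subgroup is read off from that structure. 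Your outline captures the right local invariants (odd order, Feit--Thompson, isolation of centralizers) but is missing precisely the global structural step that does the work.
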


Theorem~\ref{thrm21} describes the disconnected components of the commuting graphs of non-soluble groups. They consist of
perhaps more than one connected component containing involutions and then all the remaining connected components
are complete graphs. Moreover, these remaining connected components determine (and are determined) by the connected
components of the prime graph consisting of odd primes. To gain a comprehensive understanding of the connected components within the commuting graph of a non-abelian simple group, it is essential to ascertain the distribution of even-order elements among these components. Furthermore, for odd-order elements, in view of Theorem~\ref{thrm21}, we rely on Williams' classification of the connected components of the prime graph of simple groups~\cite{Williams}.
 This approach has been used widely in~\cite{DMS,LS}. 
 
The Brauer-Fowler theorem helps to deal with even-order elements.
\begin{theorem}[Lemma~3.5,~\cite{mp}] Let $G$ be a group with trivial center and at least two conjugacy classes of involutions.
Then, the commuting graph has a unique connected component containing all the elements of even order in $G$. 
\end{theorem}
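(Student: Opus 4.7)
The plan is to first reduce the statement to showing that all involutions of $G$ lie in a single connected component of the commuting graph, and then to handle the involutions using classical dihedral-group arithmetic in the style of Brauer--Fowler.

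For the reduction, I would observe that if $g\in G$ has even order $2k$, then the power $t:=g^k$ is an involution, which is non-trivial since $\mathbf{Z}(G)=1$ (indeed $g\neq 1$ and $g^k$ has order $2$). Since $t\in\langle g\rangle$ commutes with $g$ and both are non-central, the edge $\{g,t\}$ lies in $\Gamma_1(G)$. Thus every vertex of even order is adjacent to an involution, and it is enough to prove that all involutions of $G$ belong to the same component.

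The heart of the argument is the following observation about pairs of involutions. For any two involutions $s,t\in G$, the subgroup $D:=\langle s,t\rangle$ is dihedral of order $2\,\mathbf{o}(st)$, because $s$ and $t$ both invert $st$ by conjugation. I would use the standard fact that if $\mathbf{o}(st)$ is odd, then every two involutions of $D$ are conjugate in $D$, and in particular $s$ and $t$ are conjugate in $G$. Contrapositively, if $s$ and $t$ lie in distinct $G$-conjugacy classes, then $\mathbf{o}(st)=2m$ is even, and $z:=(st)^m$ is a non-trivial involution. Since $s$ inverts $st$, it commutes with $(st)^m=(st)^{-m}$, and similarly $t$ commutes with $z$. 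Hence $s - z - t$ is a path of length at most $2$ in $\Gamma_1(G)$ whenever $s,t$ are non-conjugate involutions.

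It remains to connect two involutions $s_1,s_2$ that do lie in the same conjugacy class. Here I use the hypothesis that $G$ has at least two classes of involutions: pick any involution $u$ in a class different from that of $s_1$ (equivalently, of $s_2$); by the previous paragraph, $u$ is at distance at most $2$ from each of $s_1,s_2$, so $s_1$ and $s_2$ are in the same component, at distance at most $4$. Combining the three steps, all involutions (and hence all elements of even order) lie in one and the same connected component of $\Gamma_1(G)$, which is then unique by construction. The only non-routine ingredient is the dihedral lemma, and no obstacle is expected; the hypothesis of two conjugacy classes of involutions is used precisely to provide the auxiliary bridge vertex $u$ in the same-class case.
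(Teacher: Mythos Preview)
Your argument is correct and is precisely the classical Brauer--Fowler approach: reduce to involutions via powers, use the dihedral structure of $\langle s,t\rangle$ to see that non-conjugate involutions are at distance at most $2$, and bridge conjugate involutions through an involution in a different class. One small point worth making explicit is that the bridge involution $z=(st)^m$ is a vertex of $\Gamma_1(G)$ because ${\bf Z}(G)=1$ and $z\ne 1$; you implicitly use this.

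As for comparison: the paper does not actually supply a proof of this statement. It is quoted verbatim as Lemma~3.5 of Morgan--Parker~\cite{mp} and used as a black box in the analysis of almost simple groups. Your write-up therefore fills in what the paper leaves to the reference, and the argument you give is essentially the same one Morgan and Parker use (and which goes back to Brauer--Fowler). There is nothing to correct.
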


For simple groups of Lie type the work of Morgan and Parker~\cite{mp} simplifies further the analysis of almost simple groups and allow us to deduce important informations on the connected components of $\Gamma_1(G)$.
\begin{theorem}[Proposition~8.10,~\cite{mp}]\label{theorem111}
	Let $G$ be a simple group of Lie type and assume G is not isomorphic to one of
	the following:
 $$\mathrm{PSL}_3(4), \mathrm{PSL}_2(q), {}^2B_2(q), {}^2G_2(q), {}^2F_4(q) \hbox{ with }q\ge 8.$$ Then the commuting graph of $G$ has a connected component containing all elements of even order of $G$.
\end{theorem}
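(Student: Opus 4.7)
The plan is to reduce the claim to a statement about involutions, and then to invoke the Brauer--Fowler-type Lemma 3.5 above. First I would observe that if $g\in G$ has even order then $t:=g^{{\bf o}(g)/2}$ is an involution centralizing $g$, so $g$ is adjacent to $t$ in $\Gamma_1(G)$. Therefore every element of even order of $G$ lies in the same connected component of $\Gamma_1(G)$ as some involution, and it suffices to show that all involutions of $G$ belong to a common connected component of $\Gamma_1(G)$.

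Next I would split on the number of conjugacy classes of involutions in $G$. If $G$ has at least two such classes, Lemma 3.5 applies directly and produces a unique connected component of $\Gamma_1(G)$ containing every element of even order, which is exactly the desired conclusion. Otherwise $G$ has a single conjugacy class of involutions; using the classification of involutions in finite simple groups of Lie type (via Aschbacher--Seitz, conveniently tabulated in Gorenstein--Lyons--Solomon), the simple groups of Lie type with this property are precisely $\mathrm{PSL}_2(q)$, ${}^2B_2(q)$, ${}^2G_2(q)$, ${}^2F_4(q)$ and $\mathrm{PSL}_3(4)$, every one of which appears in the excluded list of the theorem. Hence this case cannot occur under our hypotheses, and the argument is complete.

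The main obstacle is verifying the claim that the excluded list really does coincide with the simple groups of Lie type having a single conjugacy class of involutions. For the classical families of rank at least two, one parametrizes involutions by signed Jordan type and Witt-index invariants of the defining form, producing at least two classes outside of very small ranks; for the exceptional types one inspects the semisimple and unipotent involution classes using root-datum data, which yields $\mathrm{PSL}_3(4)$ as a characteristic-$2$ small-rank anomaly in $\mathrm{PSL}_3(q)$ (where the only involutions are the transvections), and ${}^2F_4(q)$ as a twisted exception with Suzuki-like Sylow $2$-structure. The case analysis is standard but heavy; in practice one would follow Morgan and Parker and organize it around the structure of the Sylow $2$-subgroups and the centralizers of their central involutions.
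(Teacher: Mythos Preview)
The paper does not supply a proof of this statement at all: it is quoted verbatim as Proposition~8.10 of Morgan--Parker~\cite{mp}, so there is nothing in the present paper to compare your argument against.

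That said, your proposed argument has a genuine gap. The crucial claim is that the list of excluded groups coincides with the simple groups of Lie type having a single conjugacy class of involutions, and this is false in both directions. On one side, ${}^2F_4(q)$ (for $q\ge 8$) has two classes of involutions yet appears in the excluded list; this is harmless for the logic of the theorem (excluding extra groups only weakens the statement), but it already shows the two lists do not match. The fatal direction is the other one: $\mathrm{PSL}_3(2^a)$ for $a\ge 3$ and $\mathrm{PSU}_3(2^a)$ for $a\ge 2$ each have exactly one class of involutions (in characteristic~$2$ every involution in $\mathrm{GL}_3$ is $I+N$ with $N^2=0$, forcing $\operatorname{rank}N=1$, i.e.\ a transvection), yet none of these groups is on the excluded list apart from the isolated $\mathrm{PSL}_3(4)$. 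For any such group your dichotomy breaks down: Lemma~3.5 (Brauer--Fowler) does not apply, and the group is not excluded, so you have produced no argument that its involutions lie in a common component of $\Gamma_1(G)$.

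Morgan and Parker do not proceed via a classification of one-class groups. Their proof of Proposition~8.10 works instead through the internal structure of centralizers of involutions in Lie-type groups, showing directly that any two involutions can be linked inside $\Gamma_1(G)$; the exceptions on their list are the cases where those centralizer arguments degenerate, not the cases with a unique involution class. If you want to repair your approach you would need, at minimum, a separate treatment of $\mathrm{PSL}_3(2^a)$ and $\mathrm{PSU}_3(2^a)$ that connects all transvections in the commuting graph without using Brauer--Fowler.
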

Combining Theorems~\ref{thrm21} and~\ref{theorem111}, we obtain this useful reduction.
\begin{corollary}[Corollary~2.4,~\cite{LS}]\label{cor}
Let $n$ be a positive integer, let $G$ be a non-abelian simple group  with the property that the commuting graph of $G$ has a connected component, $\Omega$ say, containing all elements having even order.  Then $\Gamma_n(G)$ is strongly connected if  for every connected component $\psi$ of the prime graph of $G$ with $2\notin\psi$ and for every Hall $\psi$-subgroup $H$ of $G$ (whose existence is guaranteed by Theorem~$\ref{thrm21}$), there exists $h\in H\setminus \{1\}$ and $x,y\in\Omega$ with $x\mapsto_n h$ and $h\mapsto_n y$. 
\end{corollary}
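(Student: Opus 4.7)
The plan is to decompose the vertex set of $\Gamma_n(G)$ into the pieces furnished by Theorem~\ref{thrm21} and then stitch them together using the hypothesis. Since $G$ is a non-abelian simple group, the upper central series is trivial, so ${\bf Z}(G)=1$ and $I_n(G)\subseteq {\bf Z}_n(G)=1$; hence the vertex set of $\Gamma_n(G)$ equals $G\setminus\{1\}$, which coincides with the vertex set of the commuting graph $\Gamma_1(G)$. By Theorem~\ref{thrm21}, and using the hypothesis that all even-order elements lie in the single component $\Omega$, the connected components of $\Gamma_1(G)$ are precisely $\Omega$ together with the sets $H\setminus\{1\}$, one for each connected component $\psi$ of the prime graph $\Pi(G)$ with $2\notin\psi$, where $H$ is the corresponding abelian Hall $\psi$-subgroup. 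Consequently, every vertex of $\Gamma_n(G)$ lies either in $\Omega$ or in some such $H\setminus\{1\}$.

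Next, I would check that each of these pieces is already strongly connected inside $\Gamma_n(G)$. Because the commuting graph has symmetric arcs and $\Gamma_1(G)$ is a subgraph of $\Gamma_n(G)$, the connectedness of $\Omega$ in $\Gamma_1(G)$ promotes to strong connectedness of $\Omega$ inside $\Gamma_n(G)$. Each relevant Hall $\psi$-subgroup $H$ is abelian by Theorem~\ref{thrm21}, so any two non-identity elements of $H$ are joined by arcs in both directions already in $\Gamma_1(G)$, and hence in $\Gamma_n(G)$.

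Finally, I would exploit the hypothesis to bridge each $H\setminus\{1\}$ to $\Omega$ with directed paths going in both senses. Let $h\in H\setminus\{1\}$ and $x,y\in\Omega$ be as supplied by the hypothesis, so that $x\mapsto_n h$ and $h\mapsto_n y$ in $\Gamma_n(G)$. For any $h'\in H\setminus\{1\}$, the abelianness of $H$ yields the $\Gamma_1$-arcs $h'\mapsto_1 h$ and $h\mapsto_1 h'$, so one obtains the length-two directed paths $x\mapsto_n h\mapsto_n h'$ from $\Omega$ into $h'$ and $h'\mapsto_n h\mapsto_n y$ from $h'$ into $\Omega$. Combined with the strong connectedness of each individual piece, any two vertices of $\Gamma_n(G)$ are then joined by a directed path: if they lie in the same piece, use its strong connectedness, and otherwise route through $\Omega$. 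The main subtlety, and really the reason the hypothesis is stated asymmetrically, is precisely that the Engel relation is not symmetric; one genuinely needs an arc from $\Omega$ into each Hall component and an arc out of each Hall component back into $\Omega$, and this is exactly what the pair $(x,y)$ provides. Beyond this bookkeeping, there is no serious obstacle in the argument.
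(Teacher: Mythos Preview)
The paper does not give its own proof of this corollary: it merely states that the result is obtained by combining Theorems~\ref{thrm21} and~\ref{theorem111} and cites \cite{LS} for the details. Your argument is exactly the natural proof one would expect from that hint, using Theorem~\ref{thrm21} to partition $G\setminus\{1\}$ into $\Omega$ together with the abelian Hall pieces $H\setminus\{1\}$, observing that each piece is strongly connected already in $\Gamma_1(G)$, and then using the hypothesised arcs $x\mapsto_n h$ and $h\mapsto_n y$ to route between pieces via $\Omega$. This matches the intended approach.

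One small remark: the justification $I_n(G)\subseteq{\bf Z}_n(G)$ is not quite the right inclusion to quote in general. The clean way to see that $I_n(G)=\{1\}$ here is that any $x\in I_n(G)$ is in particular a left Engel element, and in a finite group the left Engel elements form ${\bf F}(G)$, which is trivial since $G$ is non-abelian simple. This is a cosmetic point and does not affect the validity of the rest of your argument.
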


The following result is of paramount importance for the proof of Theorem~\ref{thrm:MAIN}.
\begin{theorem}[Theorem~1.1,~\cite{mp}]\label{thrm11}
Suppose that $G$ is a finite group with trivial center. Then every connected component of the
commuting graph of $G$ has diameter at most $10$.
\end{theorem}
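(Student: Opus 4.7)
The plan is to follow a structural decomposition of the commuting graph based on even-order versus odd-order elements, combined with a centralizer-chasing argument that reduces the diameter question to estimating distances between involutions. First, by the Williams-type classification used in Theorem~\ref{thrm21}, any connected component consisting entirely of elements of odd order is (the non-identity part of) an abelian Hall subgroup, hence a clique of diameter at most $1$. So I may restrict attention to a component $\Psi$ containing an element of even order; in fact, by the Brauer-Fowler argument (Lemma~3.5 of~\cite{mp}) already quoted, when $G$ has multiple involution classes all involutions lie in a single such component.

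Next I would reduce to two sub-problems: (a) every $x\in\Psi$ is at bounded distance from some involution $t\in\Psi$; (b) any two involutions in $\Psi$ are at bounded distance. For (a), if $x$ already has even order, then an appropriate power of $x$ is an involution commuting with $x$ and we are done in one step. If $x$ has odd order, I would try to locate $y\in{\bf C}_G(x)$ of even order, or iterate: use that ${\bf C}_G(x)$ is proper and contains a suitable subgroup whose structure forces an involution nearby. The underlying tool is the Thompson/Baer--Suzuki-type principle that a non-central element cannot have a centralizer consisting entirely of elements of odd order in too restrictive a way, combined with the detailed structure of ${\bf F}^*(G)$.

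For (b), the key observation is that $\langle t,s\rangle$ is dihedral for any two involutions $t,s$: if $|ts|$ is even, then $(ts)^{|ts|/2}$ is a central involution of $\langle t,s\rangle$ commuting with both, so $d(t,s)\le 2$; if $|ts|$ is odd, then $t$ and $s$ are conjugate in $\langle t,s\rangle$, and one has to argue that conjugate involutions are close in the commuting graph by exhibiting a commuting chain built from a common element of a suitable Sylow $2$-subgroup or centralizer. Iterating, the diameter between any two involutions is absolutely bounded, and combining with (a) yields an absolute diameter bound on $\Psi$, with $10$ presumably arising as the sharp constant after careful accounting of the steps: a distance $\le c$ from each endpoint to an involution plus $\le 2(\text{something})$ between the two involutions.

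The main obstacle I expect is uniform control of (a) across all finite groups: in an arbitrary almost simple group there is no a priori guarantee that an odd-order element has an involution in its centralizer. Handling this for simple groups of Lie type (in all characteristics, with special care in the low-rank exceptional cases such as $\mathrm{PSL}_2(q)$ and Suzuki groups where the commuting graph can have multiple components), for sporadic groups, and for alternating groups requires a case-by-case examination using the classification of finite simple groups and of the maximal subgroups of almost simple groups, very much in the spirit of the analysis in Section~\ref{almost simple groups}. The final numerical constant $10$ would then come from pushing each of these local estimates to its optimum and summing them through the reduction scheme.
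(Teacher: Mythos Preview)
This theorem is not proved in the present paper at all: it is quoted verbatim as Theorem~1.1 of Morgan--Parker~\cite{mp} and used as a black box, so there is no ``paper's own proof'' to compare your sketch against. The authors invoke it solely to bound distances inside the big connected component of $\Gamma_1(T)$ for a simple socle $T$, and all the work establishing the constant $10$ is entirely outsourced to~\cite{mp}.

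That said, your outline is broadly in the spirit of the actual Morgan--Parker argument, but a few points deserve caution. First, Theorem~\ref{thrm21} as stated applies only to \emph{non-soluble} groups, so your reduction ``odd components are cliques'' needs a separate treatment in the soluble case (where one instead exploits ${\bf F}(G)\ne 1$). Second, in step~(b) the dihedral trick handles the case $|ts|$ even cleanly, but when $|ts|$ is odd you only learn that $t$ and $s$ are conjugate, which by itself gives no commuting path between them; turning conjugacy of involutions into a short commuting chain is precisely one of the delicate parts of~\cite{mp} and is not automatic. Third, your step~(a) for odd-order $x$ is where essentially all the difficulty lies, as you acknowledge, and the constant $10$ does not fall out of the scheme you describe without the detailed structural case analysis carried out in~\cite{mp}. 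So your proposal is a reasonable high-level roadmap, but it is a sketch of someone else's theorem rather than a comparison point for anything in this paper.
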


 Following~\cite{DLN},~\cite{mp} and~\cite{Williams}, we let $\pi_1(G)$ be the connected component of the prime graph of $G$ containing the prime $2$.

\begin{lemma}\label{lemma-1}
Let $G$ be an almost simple group with $\Gamma(G)$ strongly connected. Then  $\mathrm{diam}(\Gamma(G))\le 16$.
\end{lemma}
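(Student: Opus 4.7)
\smallskip
\noindent\textbf{Proof proposal.} Since $G$ is almost simple, the centralizer in $G$ of its non-abelian simple socle is trivial, whence ${\bf Z}(G)=1$ and ${\bf Z}_\infty(G)=1$; thus $\Gamma(G)$ has vertex set $G\setminus\{1\}$ and the commuting graph $\Gamma_1(G)$ is a spanning subgraph of $\Gamma(G)$. The plan is to use $\Gamma_1(G)$ as a scaffold: Theorem~\ref{thrm11} bounds pairwise distances inside each connected component of $\Gamma_1(G)$ by $10$, and I will add short Engel bridges between these components.

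By Theorems~\ref{thrm21} and~\ref{theorem111}, apart from a short list of exceptional socles there is a distinguished component $\Omega$ of $\Gamma_1(G)$ containing all elements of even order, while every other component of $\Gamma_1(G)$ has the form $H\setminus\{1\}$, where $H$ is an abelian Hall $\psi$-subgroup of $G$ for some odd-prime connected component $\psi$ of $\Pi(G)$. Since $\Gamma(G)$ is strongly connected and each such $H\setminus\{1\}$ is isolated in $\Gamma_1(G)$, for every such $H$ there must exist $h_1,h_2\in H\setminus\{1\}$ and $a,b\in\Omega$ with $a\mapsto h_1$ and $h_2\mapsto b$ in $\Gamma(G)$ (this is the natural converse to Corollary~\ref{cor} in our setting, which one extracts by following the last arc entering $H$ and the first arc leaving $H$ along the requisite directed paths). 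Because $H$ is abelian, every ordered pair of distinct nontrivial elements of $H$ is an arc of $\Gamma_1(G)\subseteq\Gamma(G)$, so for any $z\in H\setminus\{1\}$ the two-step paths $a\mapsto h_1\mapsto z$ and $z\mapsto h_2\mapsto b$ give $d_{\Gamma(G)}(\Omega,z)\le 2$ and $d_{\Gamma(G)}(z,\Omega)\le 2$.

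For arbitrary $u,v\in G\setminus\{1\}$, pick $u',v'\in\Omega$ with $d_{\Gamma(G)}(u,u')\le 2$ and $d_{\Gamma(G)}(v',v)\le 2$ (taking $u'=u$ when $u\in\Omega$, and similarly for $v$). Since $\Gamma_1(G)\subseteq\Gamma(G)$, Theorem~\ref{thrm11} yields $d_{\Gamma(G)}(u',v')\le 10$, and concatenation gives $d_{\Gamma(G)}(u,v)\le 2+10+2=14$, well within the stated bound of $16$.

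The hard part is handling the almost simple groups whose socle is excluded from Theorem~\ref{theorem111}, namely those with socle $\mathrm{PSL}_3(4)$, $\mathrm{PSL}_2(q)$, ${}^2B_2(q)$, ${}^2G_2(q)$, or ${}^2F_4(q)$ with $q\ge 8$. Most of these are already forbidden by the strong-connectivity hypothesis through~\cite[Corollary 1.4]{LS}, but for the survivors the commuting graph may fail to possess a single component absorbing all even-order elements, and one has to inspect them case by case to confirm that the requisite Engel bridges fit within a total length of $16$. The margin of $2$ between my generic bound of $14$ and the stated $16$ is exactly what should accommodate one extra Engel hop used to splice together a small number of even-order commuting components in those exceptional configurations.
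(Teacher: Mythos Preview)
Your outline is in the right spirit, but the key ``bridge'' step does not follow from strong connectivity alone. You claim that for each isolated abelian Hall $\psi$-subgroup $H$ there exist $a,b\in\Omega$ and $h_1,h_2\in H\setminus\{1\}$ with $a\mapsto h_1$ and $h_2\mapsto b$, extracted by looking at the last arc entering $H$ and the first arc leaving $H$ along some directed path. But the vertex just before $H$ (respectively, just after $H$) on such a path need not lie in $\Omega$: when the prime graph has several odd components $\psi_1,\psi_2,\ldots$, the Engel graph may well have arcs between the corresponding Hall subgroups $H_1,H_2,\ldots$, so that the route from $\Omega$ to $H_i$ passes through other $H_j$'s first. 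In that situation your bound $d_{\Gamma(G)}(\Omega,z)\le 2$ is unsupported, and the $2+10+2$ estimate collapses. A secondary issue is that Theorem~\ref{theorem111} is stated for the \emph{simple} group, not for the almost simple overgroup $G$, so asserting that $\Gamma_1(G)$ has a single even-order component also needs justification.

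The paper sidesteps both problems by working inside the socle $T$ rather than in $G$. There the bridges are not deduced abstractly from strong connectivity but are supplied by the explicit case analysis of~\cite{LS}, which shows (for $T$ outside the short exceptional list) that for every $g_1,g_2\in T\setminus\{1\}$ there exist $z_1,z_2\in\Omega$ with $g_1\mapsto z_1$ and $z_2\mapsto g_2$ --- a genuine one-step bridge into and out of the even component. This gives $\mathrm{diam}(\Gamma(T))\le 1+10+1=12$, and then Lemma~\ref{lemma32} applied to the normal subgroup $N=T$ yields $\mathrm{diam}(\Gamma(G))\le 12+4=16$. So the $+4$ you were hoping to avoid by working directly in $G$ is precisely what absorbs the passage from the socle to the almost simple group; trying to bypass it forces you to control Engel arcs between distinct odd Hall components, which you have not done. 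The exceptional socles are likewise handled in the paper by pointing to the ad-hoc arguments in~\cite{LS}, not left as an exercise.
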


\begin{proof}
 Let $T$ be the socle of $G$. 
 We give a complete proof only when $T$ is an alternating group. From~\cite{5} or from the first lines of the proof of~\cite[Theorem 7.1]{mp}, we see that all elements of order $2$ of $T$ belong to the same connected component, $\Omega$ say, of the commuting graph of $T$.
 
Assume first that the commuting graph of $T$ is connected, that is, $\Omega=T\setminus\{1\}$. Then, by Theorem~\ref{thrm11}, $\mathrm{diam}(\Delta(T))=\mathrm{diam}(\Gamma(T))\le\mathrm{diam}(\Gamma_1(T))\le 10$. Therefore, from Lemma~\ref{lemma32}, $\mathrm{diam}(\Gamma(G))\le \mathrm{diam}(\Gamma(T))+4\le 14$.

Assume that the commuting graph of $T$ is disconnected. From Theorem~\ref{thrm21}, the prime graph of $T$ is also disconnected. When $n\ge 8$, from~\cite[Table~I]{Williams}, there exists a prime $p$ with $n\in \{p,p+1,p+2\}$ and $\pi(T)=\pi_1(T)\cup\{p\}$. When $n=7$, the connected components of the prime graph are $\{2,3\},\{5\}$ and $\{7\}$. When $n\in \{5,6\}$, the connected components of the prime graph are $\{2\},\{3\}$ and $\{5\}$. We deal with each of these cases in turn.

When $n\le 7$, we have verified the veracity of the statement of this lemma with a computer, using the computer algebra system \texttt{magma}~\cite{magma}. Strictly speaking, this is not necessary, but it avoids lengthy ad-hoc arguments for these small groups.

Suppose $n\in\{p,p+1,p+2\}$ for some prime number $p>7$. From Theorem~\ref{thrm21}, the connected components of $\Gamma_1(G)$ are $\Omega$ and one connected component for each Sylow $p$-subgroup of $T$. 
 A computation yields
$$(1,3,5)^{-1}(1,\ldots,p)^{-1}(1,3,5)(1,\ldots,p)=(1,5,3)(2,4,6).$$
This shows that, for every element $x\in T$ having order $p$, there exists $y$ having order $3$, with $[x,_2y]=1$, that is, $(x,y)$ is an arc of $\Gamma(T)$. Conversely, since ${\bf N}_T(\langle x\rangle)$ has order $p(p-1)/2$, there exists $z\in {\bf N}_T(\langle x\rangle)$ having order $(p-1)/2$. Now, $[z,_2x]=1$ and hence $(z,x)$ is an arc of $\Gamma(T)$. 
From~\cite[Theorem~7.1]{mp}, $\Omega$ has diameter $8$ in the commuting graph and hence $\mathrm{diam}(\Gamma(T))\le 8+2=10$. Now, from Lemma~\ref{lemma32} applied with $N=T$, we deduce $\mathrm{diam}(\Gamma(G))\le 10+4=14$.

All other simple groups $T$ are dealt with similarly and all the relevant information for adapting the proof above for sporadic simple groups and for groups of Lie type is in~\cite{LS}. We explain here the main idea in~\cite{LS} in one particular case. Assume that all the even order elements in $T$ belong to the same connected component, $\Omega$ say, of $\Gamma_1(T)$. The work in~\cite{LS} shows that, for any two non-identity elements $g_1,g_2$ of $T$, there exists $z_1,z_2\in \Omega$ with $g_1\mapsto z_1$ and $z_2\mapsto g_2$. In~\cite{LS}, this was enough to guarantee that 
$\Gamma(T)$ is strongly connected. However, here, for bounding the diameter of $\Gamma(T)$ we may simply use~\cite{mp}. Indeed, we obtain
$d(g_1,g_2)\le d(g_1,z_1)+d(z_1,z_2)+d(z_2,g_2)\le 1+10+1=12$ and $\mathrm{diam}(\Gamma(T))\le 12$. Therefore, from Lemma~\ref{lemma32}, $\mathrm{diam}(\Gamma(G))\le \mathrm{diam}(\Gamma(T))+4\le 16$. The only groups where this argument does not apply are the sporadic simple groups and the Lie type groups in Theorem~\ref{theorem111}. These groups are deal with ad-hoc arguments in~\cite{LS} obtaining the same bound. 
\end{proof}
One should not take too seriously the upper bound in Lemma~\ref{lemma-1}, especially in the context of almost simple groups with an alternating group as their socle; it is worth noting that ad-hoc arguments can yield significantly more precise bounds for this particular case.

\subsection{Reduction lemmas}\label{sec:reductionlemmas}We conclude this section with a series of lemmas that are used in the proof of Theorem~\ref{thrm:MAIN} to create an inductive process.

\begin{lemma}\label{lemma22DLN}
Let $G$ be a group. If $x\in G$ and $y\in {\bf F}(G)$, then $(x,y)$ is an arc of $\Lambda$. 
\end{lemma}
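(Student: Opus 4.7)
My plan is to exploit the fact that the Fitting subgroup $F:={\bf F}(G)$ is a nilpotent normal subgroup of $G$. The goal is to exhibit an explicit $n\in\mathbb{N}$ with $[x,_n y]=1$, so that $(x,y)$ is an arc of $\Lambda(G)$.

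First I would note that because $F\trianglelefteq G$, the very first commutator already lies in $F$: writing $[x,y]=(x^{-1}y^{-1}x)\,y$, the factor $x^{-1}y^{-1}x$ is a conjugate of $y^{-1}$, hence belongs to $F$, so $[x,y]\in F$. Then, by induction on $n\ge 1$, I would show that $[x,_n y]\in F$. This is automatic from the previous observation combined with the fact that $[f,y]\in F$ whenever $f,y\in F$.

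The key step is to bound how fast these commutators descend the lower central series of $F$. Let $c$ denote the nilpotency class of $F$, so that $\gamma_{c+1}(F)=1$. I claim that $[x,_n y]\in \gamma_n(F)$ for every $n\ge 1$. For $n=1$ this is the observation of the first paragraph (since $\gamma_1(F)=F$). For the inductive step, if $[x,_n y]\in \gamma_n(F)$, then
\[
[x,_{n+1} y]=[[x,_n y],y]\in [\gamma_n(F),F]\subseteq \gamma_{n+1}(F),
\]
using that $y\in F$. Setting $n:=c+1$ yields $[x,_{c+1}y]\in\gamma_{c+1}(F)=\{1\}$, and therefore $(x,y)$ is an arc of $\Lambda(G)$.

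There is no real obstacle here: the whole argument rests on the two structural facts that ${\bf F}(G)$ is normal and nilpotent. The only thing to be slightly careful about is correctly indexing the iterated commutator so that the induction lines up with the lower central series of $F$; once that bookkeeping is right, the proof is a two-line induction.
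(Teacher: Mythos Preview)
Your proof is correct. The paper's proof simply invokes the classical fact (one direction of Baer's theorem) that ${\bf F}(G)$ coincides with the set of left Engel elements of $G$; you instead give the direct argument using only that ${\bf F}(G)$ is a normal nilpotent subgroup, showing explicitly that $[x,_n y]$ descends the lower central series of ${\bf F}(G)$. In effect you are reproving the easy half of Baer's theorem, which makes your argument self-contained and gives an explicit bound ($n=c+1$, with $c$ the class of ${\bf F}(G)$), at the cost of a few extra lines; the paper's version is terser but leans on a named result whose harder direction is not actually needed here.
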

\begin{proof}It follows from the fact that the Fitting subgroup ${\bf F} (G)$
is the set of left Engel elements of $G$.
\end{proof}

\begin{lemma}\label{lemma33}Let $G$ be a group and let  $X,Y$ be two non-identity subgroups of $G$ with $[X,Y]=1$ and $G=XY$. Then $\Delta(G)$ is strongly connected and $\mathrm{diam}(\Delta(G))\le 3$.
\end{lemma}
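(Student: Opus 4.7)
The plan is to exploit the hypothesis $[X,Y]=1$ together with $G=XY$ to exhibit short directed walks in $\Delta(G)$. The argument rests on two elementary observations. First, since every element of $X$ commutes with every element of $Y$, for each $x\in X\setminus\{1\}$ and $y\in Y\setminus\{1\}$ both $x\mapsto y$ and $y\mapsto x$ are arcs of $\Delta(G)$. Second, every $g\in G$ admits a decomposition $g=xy$ with $x\in X$ and $y\in Y$, and the identity $[xy,x]=[y,x]=1$ (which uses $[X,Y]=1$) gives $[g,x]=1$; hence whenever $x\ne 1$ both $g\mapsto x$ and $x\mapsto g$ are arcs of $\Delta(G)$, and the same holds for $y$ when $y\ne 1$.

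Given two non-identity elements $g_1,g_2\in G$, write $g_i=x_iy_i$ with $x_i\in X$ and $y_i\in Y$. The generic case is when $x_1\ne 1$ and $y_2\ne 1$; then
$$g_1\mapsto x_1\mapsto y_2\mapsto g_2$$
is a directed walk in $\Delta(G)$ of length at most $3$ by the two observations above (the outer two arcs are of the ``factor'' type, the middle one is of the ``$X$--$Y$'' type). It remains to handle the degenerate subcases $x_1=1$ (so $g_1\in Y\setminus\{1\}$) and $y_2=1$ (so $g_2\in X\setminus\{1\}$). Each such subcase yields a directed walk of length at most $2$: for instance, if $g_1\in Y\setminus\{1\}$, then picking any $x\in X\setminus\{1\}$ (which exists because $X\neq 1$) one has $g_1\mapsto x\mapsto g_2$ when $g_2\in Y$, and $g_1\mapsto x_2\mapsto g_2$ when $x_2\ne 1$; the subcase $g_2\in X\setminus\{1\}$ is treated symmetrically, swapping the roles of $X$ and $Y$.

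Combining all cases yields $d_{\Delta(G)}(g_1,g_2)\le 3$ for every pair of non-identity elements of $G$, from which both the strong connectivity of $\Delta(G)$ and the diameter bound follow at once. There is no real obstacle here: the only bookkeeping is the case split according to whether the $X$-- or $Y$-component of $g_1,g_2$ vanishes, and, thanks to the two elementary observations recorded in the first paragraph, each subcase is settled by writing down an explicit walk of length at most $3$.
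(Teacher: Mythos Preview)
Your proof is correct and follows essentially the same route as the paper's. Both arguments rest on the same two facts---that every element of $X$ commutes with every element of $Y$, and that $g=xy$ commutes with each of its factors---and then pivot through these factors to build paths of length at most $3$, with a short case analysis according to which of $x_1,y_1,x_2,y_2$ happen to be trivial. The paper's case split is organized slightly differently (it also writes down the symmetric path $g_1,y_1,x_2,g_2$ explicitly), but the content is identical.
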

\begin{proof}
Leg $g_1=x_1y_1$ and $g_2=x_2y_2$ be two non-identity elements of $G$ with $x_1, x_2\in X$ and $y_1,y_2\in Y$. We show that $g_1$ and $g_2$ are at distance at most $3$ in $\Delta(G)$.
Suppose first $x_1$ and $y_2$ are not the identity. Then $$g_1=x_1y_1,\,\,x_1,\,\,y_2,\,\,x_2y_2=g_2$$
is a path of length $3$ in $\Gamma_1(G)$, and hence also in $\Delta(G)$. Similarly, if $x_2$ and $y_1$ are not the identity, then $$g_1=x_1y_1,\,\,y_1,\,\,x_2,\,\,x_2y_2=g_2$$
is a path of length $3$ in $\Gamma_1(G)$, and hence also in $\Delta(G)$. When $x_1= 1$ and $x_2\ne 1$,  $g_1=y_1,\,\,x_2,\,\,x_2y_2=g_2$
is a path of length $2$ in $\Gamma_1(G)$, and hence also in $\Delta(G)$. Analogously, when $x_1\ne  1$ and $x_2= 1$,  $g_1=x_1y_1,\,\,x_1,\,\,y_2=g_2$
is a path of length $2$ in $\Delta(G)$. When $x_1=x_2=1$, let $x\in X\setminus\{1\}$ and observe that  $g_1=y_1,\,\,x,\,\,y_2=g_2$
is a path of length $2$ in $\Delta(G)$. The only case that remains to discuss is when $x_1,x_2\ne 1$ and $y_1=y_2=1$. Let $y\in Y\setminus\{1\}$ and observe that  $g_1=x_1,\,\,y,\,\,x_2=g_2$
is a path of length $2$ in $\Delta(G)$. 
\end{proof}

\begin{lemma}\label{lemma34}If $G$ is a group with ${\bf F}^\ast(G)\ne {\bf F}(G)$ and with $\Delta(G)$ is strongly connected, then $\mathrm{diam}(\Delta(G))\le 16$.
\end{lemma}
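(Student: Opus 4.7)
Let $E := {\bf E}(G)$, and let $L_1,\dots,L_k$ denote the components of $G$; the hypothesis ${\bf F}^\ast(G)\neq {\bf F}(G)$ is equivalent to $E\neq 1$, i.e.\ $k\ge 1$. Since a quasisimple group is non-nilpotent, $E$ is a non-nilpotent normal subgroup of $G$. In most cases, the plan is to bound the distance in $\Delta(G)$ between non-identity elements of $E$ (or of a single component) and conclude via Lemma~\ref{lemma32}; the remaining case reduces directly to the almost simple situation handled by Lemma~\ref{lemma-1}.

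If $k\ge 2$, the two non-identity subgroups $X=L_1$ and $Y=L_2\cdots L_k$ of $E$ commute and satisfy $XY=E$, so Lemma~\ref{lemma33} applied to the group $E$ gives $\mathrm{diam}(\Delta(E))\le 3$. Hence any two non-identity elements of $E$ lie at distance at most $3$ in $\Delta(G)$, and Lemma~\ref{lemma32} with $N=E$ yields $\mathrm{diam}(\Delta(G))\le 7$. Suppose now $k=1$ and write $L:=L_1$, which is normal in $G$ as the unique component. If ${\bf Z}(L)\neq 1$, the same argument applied inside $L$ with the decomposition $L={\bf Z}(L)\cdot L$, combined with Lemmas~\ref{lemma33} and~\ref{lemma32}, again gives $\mathrm{diam}(\Delta(G))\le 7$.

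Assume finally $k=1$ and ${\bf Z}(L)=1$, so that $L$ is simple. If ${\bf C}_G(L)\neq 1$, fix any $c\in {\bf C}_G(L)\setminus\{1\}$; then for every pair of non-identity $y_1,y_2\in L$ the walk $y_1\mapsto c\mapsto y_2$ lives in $\Delta(G)$, and Lemma~\ref{lemma32} yields $\mathrm{diam}(\Delta(G))\le 6$. Otherwise ${\bf C}_G(L)=1$, so $G$ embeds into $\mathrm{Aut}(L)$ with socle $L$; that is, $G$ is almost simple. In this situation ${\bf Z}(G)\le {\bf C}_G(L)=1$, which forces ${\bf Z}_\infty(G)=1$ and hence $\Delta(G)=\Gamma(G)$. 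Since $\Gamma(G)$ is strongly connected by hypothesis, Lemma~\ref{lemma-1} gives $\mathrm{diam}(\Delta(G))\le 16$.

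The main obstacle is precisely this last, almost simple case, in which the bound $16$ is forced: it is inherited from Lemma~\ref{lemma-1}, and ultimately from the Morgan--Parker bound on the diameter of connected components of commuting graphs of centreless groups. In all other cases the bound drops to $7$, so the whole content of Lemma~\ref{lemma34} is really the reduction to the almost simple setup already handled in Section~\ref{almost simple groups}.
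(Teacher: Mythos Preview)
Your proof is correct and follows essentially the same strategy as the paper: decompose using a central product to apply Lemma~\ref{lemma33}, feed the resulting bound into Lemma~\ref{lemma32}, and in the residual case reduce to an almost simple group where Lemma~\ref{lemma-1} applies. The only difference is organisational: the paper works with ${\bf F}^\ast(G)={\bf F}(G){\bf E}(G)$ as the normal subgroup and invokes a single dichotomy (either ${\bf F}^\ast(G)$ is simple, whence $G$ is almost simple, or ${\bf F}^\ast(G)$ is a nontrivial central product and Lemma~\ref{lemma33} gives diameter $\le 3$), whereas you work with ${\bf E}(G)$ or a single component $L$ and run a slightly finer case split on $k$, ${\bf Z}(L)$ and ${\bf C}_G(L)$, even squeezing out a bound of $6$ in one subcase.
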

\begin{proof}
Recall that ${\bf F}^\ast(G)={\bf F}(G){\bf E}(G)$ is the central product of the Fitting subgroup ${\bf F}(G)$  with the layer subgroup ${\bf E}(G)$ of $G$.

Since ${\bf E}(G)$ is the central product of its components, either ${\bf F}^\ast(G)$ is a non-abelian simple group or $\Delta({\bf F}^\ast(G))$ is strongly connected of diameter at most 3 by Lemma~\ref{lemma33}. In the latter case, Lemma~\ref{lemma32} gives that $\Delta(G)$ is strongly connected  of diameter at most $7$.

Assume ${\bf F}^\ast(G)$ is non-abelian simple, that is, $G$ is almost simple. As $\Gamma(G)$ is strongly connected, the diameter is bounded above by $16$ by Lemma~\ref{lemma-1}.
\end{proof}

\begin{lemma}[Lemma~3.5,~\cite{DLN}]\label{lemma35DLN}Let $G$ be a group and let $x$ and $y$ be two non-identity elements of $G$. Suppose ${\bf Z}_\infty (G) = 1$, ${\bf F} (G) = {\bf F}^\ast (G)$ and $G$ is not a Frobenius group.
 If $d_{\Gamma(G)}(x, y) > 4$ and $y_r$ is a power of $y$ of order a prime $r$, then ${\bf C}_G (y_r )$ has odd order and it is metacyclic. In particular, $r$ is odd.
\end{lemma}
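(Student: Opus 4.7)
The plan is to derive the conclusion by contradiction: assuming $d_{\Gamma(G)}(x,y) > 4$, I will constrain the structure of $K := {\bf C}_G(y_r)$ through successive path constructions in $\Gamma(G)$. The hypotheses ${\bf Z}_\infty(G)=1$ and ${\bf F}(G)={\bf F}^{\ast}(G)$ yield two features used throughout: first, Lemma~\ref{lemma22DLN} provides an arc $x \to f$ in $\Gamma(G)$ for every $f \in F \setminus \{1\}$, where $F := {\bf F}(G)$; second, ${\bf C}_G(F) \leq F$, so every element of $G \setminus F$ acts non-trivially on $F$ by conjugation.

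The central observation is that every non-identity element $c \in K$ acts fixed-point-freely on $F$. Indeed, if $f \in {\bf C}_F(c) \setminus \{1\}$, then
\[
x \;\longrightarrow\; f \;\longrightarrow\; c \;\longrightarrow\; y_r \;\longrightarrow\; y
\]
is a directed path of length four in $\Gamma(G)$: the first arc is Lemma~\ref{lemma22DLN}, and the remaining three arcs hold because consecutive vertices commute ($[f,c]=1$ by choice of $f$, $[c,y_r]=1$ since $c \in K$, and $[y_r,y]=1$ because $y_r \in \langle y \rangle$). This contradicts $d(x,y) > 4$. Hence ${\bf C}_F(c) = 1$ for every $c \in K \setminus \{1\}$; in particular $F \cap K = 1$ and $K$ acts faithfully and fixed-point-freely on $F$. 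Therefore $FK = F \rtimes K$ is a Frobenius group with kernel $F$ and complement $K$, so $K$ inherits the structure of a Frobenius complement.

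From the classical structure theory of Frobenius complements (Zassenhaus, Passman), every Sylow subgroup of $K$ of odd order is cyclic, while the Sylow $2$-subgroup is cyclic or generalized quaternion. Once $|K|$ is known to be odd, $K$ is a Z-group and consequently metacyclic, and the final clause ``$r$ is odd'' follows immediately because $r = {\bf o}(y_r)$ divides $|K|$. It remains to rule out involutions in $K$. Suppose $t \in K$ is an involution; then $t$ acts fixed-point-freely on $F$, which by Thompson's theorem forces $F$ to be abelian of odd order with $t$ inverting every element. The final step uses that $G$ is not a Frobenius group, so $FK$ is a proper subgroup of $G$, and exploits the dihedral sections inside $F\langle t\rangle$ together with an element of $G \setminus FK$ to produce an additional directed path of length at most four from $x$ to $y$, contradicting the hypothesis. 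This last step, parallel to the analysis in Lemma~3.5 of \cite{DLN}, is the technical heart of the argument and represents the main obstacle: the ``obvious'' length-four path $x \to f \to t \to y_r \to y$ is blocked since ${\bf C}_F(t)=1$ prevents any $f \in F$ from admitting $f \to t$ as an arc, so one must instead exploit how $G$ fails to be Frobenius to exhibit an alternative short walk.
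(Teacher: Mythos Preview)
The paper does not supply its own proof of this lemma: it is quoted from \cite{DLN}, and the short \texttt{proof} environment that follows in the source is not a proof of the lemma at all---it refers to undefined elements $g_1,g_2$ and actually \emph{invokes} Lemma~\ref{lemma35DLN} in its body, so it is an orphaned fragment belonging to some deleted subsequent statement. There is therefore nothing in the paper to compare your argument against.

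Judged on its own, the first half of your proposal is correct. The four-step walk $x\to f\to c\to y_r\to y$ does force every non-identity $c\in K={\bf C}_G(y_r)$ to act fixed-point-freely on $F={\bf F}(G)$ (using ${\bf C}_G(F)\le F$, which follows from ${\bf F}^\ast(G)={\bf F}(G)$), whence $FK$ is Frobenius with complement $K$, and the metacyclic conclusion for odd $|K|$ is immediate from the Zassenhaus structure theory.

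The genuine gap is precisely where you place it yourself. To finish you must exclude an involution $t\in K$, and you do not: you record that such a $t$ inverts $F$ (a result of Burnside, incidentally, not Thompson) and that $FK<G$, then gesture at ``dihedral sections'' and ``an element of $G\setminus FK$'' without producing any walk. This step is the whole point of the hypothesis that $G$ is not Frobenius, and it carries all the remaining content of the lemma; until you actually construct the directed path of length at most $4$ from $x$ to $y$ that yields the contradiction, the proof is incomplete. As written, your proposal is an accurate outline of the strategy in \cite{DLN} with its hardest step left as a promissory note.
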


\begin{proof}
There exist $x_1, x_2\in G$ of order 2 such that $[g_1,x_1]=[g_2,x_2]=1.$ By Lemma \ref{lemma35DLN}
$$d_{\Gamma(G)}(g_1, g_2)\leq d_{\Gamma(G)}(g_1, x_1)+d_{\Gamma(G)}(x_1, x_2)+d_{\Gamma(G)}(x_2, g_2)\leq 1+4+1=6. \qedhere$$
\end{proof}

\begin{lemma}[Proposition~3.10,~\cite{DLN}]\label{proposition3.10} Assume ${\bf Z}_\infty (G) = 1$, ${\bf F} (G) = {\bf F}^\ast (G)$ and there
exist $x, y \in G$ with $d_{\Delta(G)}(x, y) > 4$. Let $J/{\bf F} (G) = {\bf F} (G/{\bf F} (G))$ and $J^\ast /{\bf F} (G) = {\bf F}^\ast (G/{\bf F} (G))$. If $J = J^\ast$, then $G$ is a Frobenius group.
\end{lemma}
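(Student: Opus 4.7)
Set $F := {\bf F}(G) = {\bf F}^*(G)$. Since ${\bf Z}_\infty(G)=1$, we have $\Delta(G)=\Gamma(G)$ and ${\bf C}_G(F)\le F$; in particular Lemmas~\ref{lemma22DLN} and~\ref{lemma35DLN} apply to $\Delta(G)$ without modification. My plan is to show $G$ is Frobenius with kernel $F$.

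The first step is to extract from the distance hypothesis that $\langle y\rangle$ acts fixed-point-freely on $F$. Fix any $y^k\in\langle y\rangle$ with $y^k\ne 1$; if some $z\in{\bf C}_F(y^k)\setminus\{1\}$ existed, then by Lemma~\ref{lemma22DLN} the arc $x\mapsto z$ would hold, and the commuting relations $[z,y^k]=1$ and $[y^k,y]=1$ would yield the arcs $z\mapsto y^k$ and $y^k\mapsto y$. Concatenation gives a directed path of length $3$ from $x$ to $y$ in $\Delta(G)$, contradicting $d_{\Delta(G)}(x,y)>4$. Hence ${\bf C}_F(y^k)=1$ for every non-trivial $y^k\in\langle y\rangle$. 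A standard fixed-point-free-action argument (a $p$-element cannot act without fixed points on a non-trivial $p$-group, applied to the characteristic Sylow $p$-subgroup of the nilpotent group $F$) yields $\gcd(|y|,|F|)=1$, so $F\rtimes\langle y\rangle$ is a Frobenius subgroup of $G$. Moreover, Lemma~\ref{lemma35DLN} applied to $(x,y)$ forces ${\bf C}_G(y_r)$ to be metacyclic of odd order for every prime $r\mid|y|$ and every $y_r\in\langle y\rangle$ of order $r$; in particular $|y|$ itself is odd.

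The final step is to upgrade the Frobenius subgroup $F\rtimes\langle y\rangle$ to a Frobenius decomposition of $G$ itself. Here the hypothesis $J=J^*$ is essential, since it means ${\bf F}^*(G/F)={\bf F}(G/F)=J/F$ is nilpotent and $G/F$ has no components. My approach would be to argue by contradiction: assume $G$ is not Frobenius with kernel $F$, so some $g\in G\setminus F$ has ${\bf C}_F(g)\ne 1$; using such a $g$ together with the metacyclic odd-order centralizer information from the previous paragraph and the nilpotent structure of $J/F$, the aim is to produce either a directed path of length at most $4$ from $x$ to $y$ in $\Delta(G)$ (contradicting the distance hypothesis) or a non-trivial quasisimple subnormal subgroup of $G/F$ (contradicting $J=J^*$). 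I expect this to be the hardest part: bridging from the fixed-point-free behaviour of a single cyclic subgroup $\langle y\rangle$ to a full Frobenius decomposition of $G$ is where $J=J^*$ must be used in an essential way, since without it non-solvable sections of $G/F$ (genuine quasisimple components) would provide counterexamples satisfying the remaining hypotheses.
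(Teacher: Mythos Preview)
The paper does not give its own proof of this lemma: it is imported verbatim from \cite{DLN} (Proposition~3.10 there) and used as a black box, so there is nothing in the present paper to compare your attempt against. (The short \texttt{proof} environment that appears immediately before the statement in the source is an orphaned fragment attached to Lemma~\ref{lemma35DLN}; it refers to undefined $g_1,g_2$, cites Lemma~\ref{lemma35DLN} itself, and in any case is placed \emph{before} the statement of Lemma~\ref{proposition3.10}, so it is not a proof of the lemma under discussion.)

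On the substance of your sketch: the first step is correct. If some non-trivial $y^k$ centralised a non-trivial $z\in F$, then $x\mapsto z\mapsto y^k\mapsto y$ is a directed walk of length at most $3$ in $\Delta(G)$, contradicting $d_{\Delta(G)}(x,y)>4$; the coprimality $\gcd(|y|,|F|)=1$, the Frobenius structure of $F\langle y\rangle$, and the odd metacyclic centralisers from Lemma~\ref{lemma35DLN} all follow as you say.

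The final paragraph, however, is a statement of intent, not a proof. You write ``My approach would be\ldots'' and ``I expect this to be the hardest part'', and you never carry out the argument. Passing from ``the single cyclic group $\langle y\rangle$ acts fixed-point-freely on $F$'' to ``every element of $G\setminus F$ acts fixed-point-freely on $F$'' is the entire content of the lemma, and it is exactly here that the hypothesis $J=J^\ast$ must do real work. Your proposal supplies no mechanism connecting a hypothetical $g\in G\setminus F$ with ${\bf C}_F(g)\ne 1$ either to a short directed path from $x$ to $y$ or to a component of $G/F$; simply asserting that one of these two contradictions should eventually emerge is not an argument. As written, the proposal has a genuine gap at its decisive step.
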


\begin{lemma}\label{technical}
Let $G$ be a group and let $x\in G$ having prime order $p$ such that ${\bf N}_G(\langle x\rangle){\bf F}(G)$ is a Frobenius group with Frobenius kernel ${\bf F}(G)$ and Frobenius complement ${\bf N}_G(\langle x\rangle)$. Then the mapping $$\theta:{\bf N}_G(\langle x\rangle)\to {\bf N}_{G/{\bf F}(G)}(\langle x{\bf F}(G)\rangle)$$ sending $g$ to $g{\bf F}(G)$ $\forall g\in G$ is surjective and the image of ${\bf C}_G(x)$ via $\theta$ is ${\bf C}_{G/{\bf F}(G)}(x{\bf F}(G))$. In particular,
$$\frac{{\bf N}_G(\langle x\rangle)}{{\bf C}_G( x)}\cong\frac{{\bf N}_{G/{\bf F}(G)}(\langle x{\bf F}(G)\rangle)}{{\bf C}_{G/{\bf F}(G)}(x{\bf F}(G))}.$$
\end{lemma}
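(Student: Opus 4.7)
Write $F := {\bf F}(G)$, $N := {\bf N}_G(\langle x\rangle)$, $\bar G := G/F$, and $\bar x := xF$. The plan is to show that $\theta$, restricted to $N$, is an injection whose image is exactly ${\bf N}_{\bar G}(\langle \bar x\rangle)$ and which carries ${\bf C}_G(x) \le N$ onto ${\bf C}_{\bar G}(\bar x)$; the claimed isomorphism of quotients then follows at once. Three consequences of the Frobenius hypothesis on $NF$ are used throughout: $N \cap F = 1$, so $\theta\vert_N$ is injective; $p$ is coprime to $|F|$, since the kernel and complement of a Frobenius group have coprime orders; and $\langle x\rangle F$ is itself a Frobenius subgroup of $NF$ with kernel $F$ and complement $\langle x\rangle$.

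The containments $\theta(N) \subseteq {\bf N}_{\bar G}(\langle \bar x\rangle)$ and $\theta({\bf C}_G(x)) \subseteq {\bf C}_{\bar G}(\bar x)$ are formal. The main obstacle, and the only point where real content enters, is the reverse inclusion $\theta(N) \supseteq {\bf N}_{\bar G}(\langle \bar x\rangle)$. Given $\bar g \in {\bf N}_{\bar G}(\langle \bar x\rangle)$, the element $gxg^{-1}$ lies in $\langle x\rangle F$ and has order $p$. In a Frobenius group, every element whose order is coprime to the order of the kernel must lie in some conjugate of the complement; applying this inside $\langle x\rangle F$ produces an $f \in F$ with $gxg^{-1} \in f\langle x\rangle f^{-1}$. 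Comparing the two cyclic subgroups of order $p$ gives $g\langle x\rangle g^{-1} = f\langle x\rangle f^{-1}$, so $f^{-1}g \in N$ and $\theta(f^{-1}g) = \bar g$.

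For the centraliser half: given $\bar g \in {\bf C}_{\bar G}(\bar x)$, use the previous paragraph to pick $n \in N$ with $\bar n = \bar g$, write $nxn^{-1} = x^k$, and observe that $x^{k-1} \in F \cap \langle x\rangle \le N \cap F = 1$, so $k \equiv 1 \pmod p$ and $n \in {\bf C}_G(x)$. Combining, $\theta\vert_N$ is a bijection from $N$ onto ${\bf N}_{\bar G}(\langle \bar x\rangle)$ mapping ${\bf C}_G(x)$ onto ${\bf C}_{\bar G}(\bar x)$, which delivers the stated isomorphism of quotients.
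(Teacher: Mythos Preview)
Your proof is correct and follows essentially the same route as the paper's: both use that $\langle x\rangle F$ is Frobenius with kernel $F$ to conjugate the order-$p$ element $gxg^{-1}=x^if_1$ into the complement $\langle x\rangle$ via some $f\in F$, and both handle the centraliser case by the same argument specialized to $i=1$. Your write-up is slightly more explicit about why $\theta\vert_N$ is injective and why the quotient isomorphism follows, but the underlying argument is the same.
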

\begin{proof}
Let $g{\bf F}(G)\in {\bf N}_{G/{\bf F}(G)}(\langle x{\bf F}(G)\rangle)$. Then $x^g=x^if_1$, for some $i\in\{1,\ldots,p-1\}$ and some $f_1\in {\bf F}(G)$. Since $x$ acts fixed-point-freely on ${\bf F}(G)$, the element $x^if_1$ has order $p$. Since $\langle x\rangle\ltimes {\bf F}(G)$ is a Frobenius group, there exists $f_2\in {\bf F}(G)$ with $x^g=(x^if_1)^{f_2}=x^i$. Thus $x^{gf_2^{-1}}=x^{i}$, $gf_2^{-1}\in {\bf N}_G(\langle x\rangle)$ and $\theta(gf_2^{-1})=g{\bf F}(G)$.

Clearly, $\theta({\bf C}_G(x))\le {\bf C}_{G/{\bf F}(G)}(x{\bf F}(G))$. Conversely, to prove the reverse inclusion, we may argue as in the paragraph above observing that $i=1$.
\end{proof}

Let $q$ be a non-zero integer  and let $t$ be a positive integer. Recall that a \textit{\textbf{primitive prime divisor}} of $q^t-1$ is a prime $p$ such that $p\mid q^t-1$ and $p\nmid q^i-1$, for all $1\le i<t$. We also write $t=\mathrm{ord}_p(q)$. Zsigmondy's theorem~\cite{zi} shows that $q^t-1$ admits a primitive prime divisor, except when $t=2$ and $q$ is a Mersenne prime, or when $(t,q)=(6,2)$.

\begin{lemma}\label{lemma:simple}
Let $S$ be a non-abelian simple group, let $p$ be an odd prime number dividing $|S|$ and let $P$ be a Sylow $p$-subgroup of $S$ with $|{\bf N}_S(P):{\bf C}_S(P)|$ odd. Then $(S,p,|{\bf N}_S(P)|)$ appears in Table~$\ref{table1}$.
\end{lemma}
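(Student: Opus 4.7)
The plan is to invoke the Classification of Finite Simple Groups and analyze each family of non-abelian simple groups separately. The key observation is that ${\bf N}_S(P)/{\bf C}_S(P)$ embeds into $\mathrm{Aut}(P)$ with order coprime to $p$; requiring this quotient to be odd is very restrictive because in most simple groups there is an involution normalizing (but not centralizing) some cyclic subgroup of $P$ (e.g.\ a Weyl-group involution inverting a toral element), forcing ${\bf N}_S(P)/{\bf C}_S(P)$ to be even. I therefore expect Table~\ref{table1} to consist of a short list of small groups together with a handful of low-rank Lie type families.

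\textbf{Step 1: Alternating groups.} For $S=A_n$, use the explicit description of Sylow $p$-subgroups as iterated wreath products based on the base-$p$ expansion $n=\sum a_i p^i$ (Weir). For $p\le n<2p$, $P$ is cyclic of order $p$ generated by a $p$-cycle and ${\bf N}_{A_n}(P)/{\bf C}_{A_n}(P)$ is cyclic of order $(p-1)/2$, which is odd only when $(p-1)/2$ is odd; this produces the first batch of entries. For larger $n$, the wreathed structure forces symmetric-group factors (containing transpositions) into ${\bf N}_S(P)/{\bf C}_S(P)$, so the quotient is even outside a tiny residual list handled by direct inspection.

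\textbf{Step 2: Sporadic simple groups.} Run through the finite list using the \textsc{Atlas} (or a short \textsc{Gap}/\textsc{Magma} check) and record exactly which pairs $(S,p)$ satisfy the odd-quotient condition together with the corresponding value $|{\bf N}_S(P)|$.

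\textbf{Step 3: Groups of Lie type $S=S(q)$ in characteristic $\ell$.} Split into two sub-cases.
\emph{(a) Defining characteristic $p=\ell$.} Then ${\bf N}_S(P)$ is a Borel subgroup $B=UT$ with $P=U$, and ${\bf N}_S(P)/{\bf C}_S(P)$ surjects onto the image of $T$ acting faithfully on $U/[U,U]$. Since this torus has even order except for extremely small rank over very small fields, only a handful of exceptions survive.
\emph{(b) Cross characteristic $p\ne\ell$.} Then $P$ lies in the normalizer of a maximal torus $T_0$, and ${\bf N}_S(P)/{\bf C}_S(P)$ is a subquotient of the relative Weyl group $W(T_0)$. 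Invoke Zsigmondy to identify $p$ as a primitive prime divisor of some $q^t-1$, which pins down $T_0$, and then read off $W(T_0)$: for each family (linear, unitary, symplectic, orthogonal, exceptional) list precisely which $t$ produce a relative Weyl group of odd order on the $p$-part.

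\textbf{Main obstacle.} The cross-characteristic Lie type case is the technical heart: a single prime can appear as a primitive prime divisor for several values of $t$ in small groups, the torus structure varies across the five classical families and the exceptional groups, and boundary cases with $(t,q)=(6,2)$ or $t=2$ with $q$ a Mersenne prime require separate attention. This is where the bulk of the case analysis lives and where the exact entries of Table~\ref{table1} are determined; the alternating and defining-characteristic analyses are comparatively quick.
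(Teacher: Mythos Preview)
Your high-level strategy (CFSG case split into alternating, sporadic, Lie type defining/cross characteristic) matches the paper's, but you are missing the key opening move. The paper does \emph{not} attack each family from scratch: it first invokes Xu--Zhou \cite{XZ} (built on Guralnick--Navarro--Tiep \cite{GNT}), which already classifies pairs $(S,p)$ with odd Sylow automizer and in particular forces $P$ to be cyclic unless $S\cong\mathrm{PSL}_2(q)$ with $q\equiv 3\pmod 4$, and moreover restricts the Lie types that can occur to $A_m$, ${}^2A_m$, $D_m/{}^2D_m$ with $m$ odd, and $E_6/{}^2E_6$. Everything after that is a short computation inside these few families.

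Without that reduction your Step~3(b) has a genuine gap. You assert that $P\le{\bf N}_S(T_0)$ and that ${\bf N}_S(P)/{\bf C}_S(P)$ is a subquotient of the relative Weyl group $W(T_0)$. The first part is fine, but the second needs $P$ to sit inside $T_0$ itself (so that ${\bf C}_S(P)\ge T_0$ and, under suitable regularity, ${\bf N}_S(P)\le{\bf N}_S(T_0)$); for a non-abelian $P$ with nontrivial Weyl part this fails, and even for abelian $P$ one must argue that the torus containing $P$ is unique enough to be normalized by ${\bf N}_S(P)$. Establishing that $P$ is cyclic (hence toral) under the odd-automizer hypothesis is precisely the content of \cite{XZ}, so your plan would amount to reproving that paper before the Zsigmondy/Weyl-group bookkeeping can begin. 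A smaller point: in Step~1 the quotient has order $(p-1)/2$ only for $n\in\{p,p+1\}$; for $p+2\le n<2p$ it has order $p-1$ (even), which is why Table~\ref{table1} records just $n\in\{p,p+1\}$.
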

\begin{proof}
Building on the work of Guralnick, Navarro and Tiep~\cite[Theorem~A]{GNT}, Xu and Zhou~\cite{XZ} give useful information on the non-abelian simple groups admitting a Sylow $p$-subgroup, for some odd prime $p$, with $|{\bf N}_S(P):{\bf C}_S(P)|$ odd. Indeed, from~\cite[Theorem~1.1]{XZ} we deduce that either $P$ is cyclic or $S\cong\mathrm{PSL}_2(q)$ with $q\equiv 3\pmod 4$. Suppose first $S\cong\mathrm{PSL}_2(q)$ with $q$ a prime power (we deal with all cases, regardless of the congruence of $q$ modulo $4$). If $p\mid q$, then $|{\bf N}_S(P):{\bf C}_S(P)|=(q-1)/\gcd(q-1,2)$, which is odd if and only if $q\equiv 3\pmod 4$. If $p\mid q\pm 1$, then $|{\bf N}_S(P):{\bf C}_S(P)|=2$ is even.  In particular, for the rest of the proof, we may suppose that $P$ is cyclic and that $S$ is not isomorphic to $\mathrm{PSL}_2(q)$.

Assume $S=\mathrm{Alt}(n)$, for some $n\ge 5$. Since $P$ is cyclic, $n<2p$ and hence $P$ is generated by a cycle of length $p$. Therefore $|{\bf N}_S(P)|=(p-1)p(n-p)!/2$ and 
$$|{\bf C}_S(P)|=
\begin{cases}
p(n-p)!/2&\textrm{if }p<n-1\\
p&\textrm{if }p\ge n-1.
\end{cases}$$ Thus $|{\bf N}_S(P):{\bf C}_S(P)|$ is odd if and only if $n\in \{p,p+1\}$ and $p\equiv 3\pmod 4$.

Assume $S$ is a sporadic simple group. Here the proof follows by using~\cite{atlas}. 

Assume that $S$ is a simple group of Lie type defined over the finite field $\mathbb{F}_q$. Since we use the results in~\cite{GNT,XZ}, we use the Lie notation for $S$. When $S$ is isomorphic to $B_2(2)'$, $G_2(2)'$, ${}^2F_4(2)'$ or ${}^2G_2(3)'$, the proof follows with a computation with the computer algebra system \texttt{magma}~\cite{magma}. Therefore, we may exclude these groups from further analysis.

 Now,~\cite[Corollary]{XZ} shows that either $p$ divides $q$, or  $S$ is of type $A_m$ or ${}^2A_ m$ with $m \ge 2$, $D_m$ or ${}^2D_m$ with $2 \nmid m$ and with $m \ge 5$, $E_6$ or ${}^2E_6$. We deal with each of these cases in turn.

Suppose first that $p$ divides $q$. By the Borel-Tits theorem, ${\bf C}_S(P)\le P$ and hence $|{\bf N}_S(P)|$ is odd, because so is $|{\bf N}_S(P):{\bf C}_S(P)|$. Therefore, we may apply~\cite[Theorem~A]{GNT} and we obtain that no example arises in this case (recall that we have dealt with $\mathrm{PSL}_2(q)=A_1(q)$ above).  For the rest of the proof we may suppose that $p$ does not divide $q$.

\smallskip

Assume $S=A_m(q)=\mathrm{PSL}_{m+1}(q)$ or $S={}^2A_m(q)=\mathrm{PSU}_{m+1}(q)$. Set $n=m+1$, and $\varepsilon=+$ when $S=\mathrm{PSL}_n(q)$ and $\varepsilon=-$ when $S=\mathrm{PSU}_n(q)$. Let $\ell=\mathrm{ord}_p(\varepsilon q)$.  If $\ell=1$, then by considering the diagonal matrices in $\mathrm{SL}_n(q)$ or in $\mathrm{SU}_n(q)$ we see that  a Sylow $p$-subgroup of $S$ cannot be cyclic, because $n\ge 3$. Therefore $\ell>1$ and hence $P$ is isomorphic to a Sylow $p$-subgroup of $\mathrm{SL}_n(q)$ or of $\mathrm{SU}_n(q)$.  We use some information from~\cite{GLS3} for the Sylow subgroup structure in cross-characteristic. Indeed, $P$ is the semidirect product of a toral part and of a Weyl part. Since $P$ is cyclic, the Weyl part must be the identity and hence $p$ does not divide the Weyl group of $\mathrm{SL}_n(q)$ or of $\mathrm{SU}_n(q)$, that is, $p>n$. Similarly, since the toral part must be cyclic, we have $2\ell> n$. Therefore a generator of $P$ is conjugate to a matrix of the form
\[
\begin{pmatrix}
A&0\\
0&I
\end{pmatrix},
\]
where $I$ is an $(n-\ell)\times(n-\ell)$-identity matrix and $A$ acts irreducibly. From this it follows that, $|{\bf N}_S(P):{\bf C}_S(P)|=\ell$. Therefore $\ell$ is odd.

\smallskip

Assume $S=D_m(q)=\mathrm{P}\Omega_{2m}^+(q)$ or $S={}^2\mathrm{P}\Omega_{2m}^-(q)$, with $m\ge 5$ odd.  Set $\varepsilon=+$ when $S=\mathrm{P}\Omega^+_{2m}(q)$ and 
$\varepsilon=-$ when $S=\mathrm{P}\Omega_{2m}^-(q)$. Since $p$ is odd, $P$ is isomorphic to a Sylow $p$-subgroup of $\Omega_{2m}^\varepsilon(q)$. As in the case above, $P$ is the semidirect product of the toral part and of the Weyl part. As $P$ is cyclic, the toral part must be the identity and hence $p>m$. Recall that
$$|\Omega_{2m}^\varepsilon(q)|=\frac{1}{2}q^{m(m-1)}(q^m-\varepsilon 1)\prod_{i=1}^{m-1}(q^{2i}-1).$$
Let $e=\mathrm{ord}_p(q)$.  If $e=m$, then $\varepsilon=+$ and $P$ is contained in a maximal non-split torus of $\Omega_{2m}^+(q)$ having order $q^{m}-1$, whose normalizer has order $(q^m-1)m$. Thus $|{\bf N}_S(P):{\bf C}_S(P)|=m$ is odd and we obtain one of the examples in Table~\ref{table1}. Similarly, if $e=2m$, then $\varepsilon=-$ and $P$ is contained in a maximal non-split torus of $\Omega_{2m}^-(q)$ having order $q^{m}+1$, whose normalizer has order $(q^m+1)m$. Thus $|{\bf N}_S(P):{\bf C}_S(P)|=m$ is odd and we obtain one of the examples in Table~\ref{table1}. Therefore, for the rest of the proof, we may suppose that $p$ divides $q^{2i}-1$, for some $i\le m-1$.

We use some classical  results on the $\Omega_{2m}^\varepsilon(q)$-conjugacy classes, we use~\cite{BurGiu} as a reference. Let $x\in P$ with ${\bf o}(x)=p$. Firstly, by~\cite[Lemma~3.5.3]{BurGiu}, two semisimple elements of odd order in $\mathrm{O}_{2m}^\varepsilon(q)$ are $\mathrm{O}_{2m}^\varepsilon$-conjugate if and only if they are conjugate in $\mathrm{GL}_n(q)$. Secondly, by~\cite[Proposition~3.5.8]{BurGiu}, we have $x^{\mathrm{P}\Omega_{2m}^\varepsilon(q)}=x^{\mathrm{PSO}_{2m}^\varepsilon(q)}$, and
 $x^{\mathrm{PSO}_{2m}^\varepsilon(q)}=x^{\mathrm{PO}_{2m}^\varepsilon(q)}$ unless $x$ acts on the underlying vector space of $S$ fixing only the zero vector. By combining these two results and by observing that $x$ does not act fixed-point-freely on the underlying vector space (because $e=\mathrm{ord}_p(x)\le 2i$, with $i<m$), we deduce that $x$ and $x^{-1}$ are $\mathrm{P}\Omega_{2m}^\varepsilon(n)$-conjugate and hence $|{\bf N}_S(P):{\bf C}_S(P)|$ is even.  

\smallskip

Assume $S=E_6(q)$ or $S={}^2E_6(q)$. The result in this case follows from~\cite[Table~C]{LSS}.
\end{proof}

\begin{corollary}\label{cor:corollary}
Let $S$ be a non-abelian simple group, let $p_1,\ldots,p_\ell$ be prime divisors of the order of $S$ with $p_i\mid p_{i+1}-1$ $\forall i\in \{1,\ldots,\ell-1\}$ and with $|{\bf N}_S(P_i):{\bf C}_S(P_i)|$ odd for all Sylow $p_i$-subgroups of $S$ and $\forall i\in \{1,\ldots,\ell\}$. Then $\ell\le 2$. 
\end{corollary}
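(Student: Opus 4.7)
The plan is to argue by contradiction: suppose $\ell \ge 3$, fix primes $p_1 < p_2 < p_3$ satisfying the hypotheses, and apply Lemma~\ref{lemma:simple} to each $p_i$, forcing $(S, p_i, |{\bf N}_S(P_i)|)$ to appear in Table~\ref{table1}. Since the same $S$ must accommodate all three admissible primes, the proof reduces to a row-by-row inspection of Table~\ref{table1}, combined with an arithmetic growth estimate arising from the chain condition $p_{i-1}\mid p_i-1$.

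For many rows of Table~\ref{table1}, very few primes are admissible for a fixed $S$ and the conclusion is immediate. For $S = \mathrm{Alt}(n)$ admissibility forces $n \in \{p, p+1\}$ with $p \equiv 3 \pmod 4$, so at most one prime qualifies. For $S = \mathrm{PSL}_2(q)$ with $q \equiv 3 \pmod 4$ only the defining characteristic of $q$ is admissible. For the sporadic simple groups, the small exceptions $B_2(2)'$, $G_2(2)'$, ${}^2F_4(2)'$, ${}^2G_2(3)'$, and the groups $E_6(q), {}^2E_6(q)$, the claim is settled by direct inspection of~\cite{atlas}, by a \texttt{magma} check, and by~\cite[Table~C]{LSS} respectively.

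The main body of work concerns the higher-rank classical families: $S = \mathrm{PSL}_n(q), \mathrm{PSU}_n(q)$ with $n \ge 3$, and $S = \mathrm{P}\Omega_{2m}^\pm(q)$ with $m \ge 5$ odd. Setting $d_i := |{\bf N}_S(P_i):{\bf C}_S(P_i)|$, Lemma~\ref{lemma:simple} gives $d_i = \mathrm{ord}_{p_i}(\varepsilon q) \in (n/2, n]$ in the linear/unitary case and $d_i = m$ in the orthogonal case. Since each $P_i$ is cyclic (by the proof of Lemma~\ref{lemma:simple}), the embedding
$${\bf N}_S(P_i)/{\bf C}_S(P_i)\hookrightarrow\mathrm{Aut}(P_i)\cong(\mathbb{Z}/|P_i|\mathbb{Z})^*$$
forces the part of $d_i$ coprime to $p_i$ to divide $p_i - 1$; as $p_i$ exceeds the relevant parameter ($n$ or $m$), we have $d_i < p_i$ and hence $d_i \mid p_i - 1$. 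Combining this with $p_{i-1} \mid p_i - 1$ and $\gcd(p_{i-1}, d_i) = 1$ (again because $p_{i-1} > n \ge d_i$ or $p_{i-1} > m = d_i$), we obtain $d_i\, p_{i-1} \mid p_i - 1$, and iterating yields the multiplicative growth $p_3 > d_2 d_3\, p_1$, hence $p_3 > n \cdot d_2 d_3$ (respectively $m \cdot d_2 d_3$).

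The main obstacle is converting this multiplicative growth into a contradiction with the Sylow/cyclotomic upper bound: each $p_i$ divides the appropriate cyclotomic factor $\Phi_{d_i}(q)$ or $\Phi_{2 d_i}(q)$, and in particular $p_i \le (q+1)^{\varphi(d_i)}$ or $(q+1)^{\varphi(2 d_i)}$. For all but finitely many choices of the parameters the two bounds clash directly. The remaining small configurations are handled by an explicit check: Zsigmondy's theorem forces each admissible $p_i$ to be a primitive prime divisor of $q^{d_i} - 1$ (respectively $q^{2 d_i} - 1$), and one verifies in the leftover low-parameter cases that no three such primes can satisfy the chain condition. Together, these ingredients rule out $\ell \ge 3$ and establish the corollary.
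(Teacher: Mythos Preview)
The paper's own proof is the single sentence ``This follows from Lemma~\ref{lemma:simple}'', treating the corollary as an immediate reading of Table~\ref{table1}. Your proposal attempts to supply the missing quantitative argument for the infinite classical families, and that is precisely where it breaks down.

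In the linear, unitary and orthogonal cases you obtain the growth estimate $p_3 > d_2 d_3\, p_1$ and then claim that this clashes with the cyclotomic bound $p_3 \le (q+1)^{\varphi(d_3)}$ ``for all but finitely many choices of the parameters''. It does not: your lower bound depends only on $n$ (or $m$), while the upper bound grows without limit in $q$. Take $S = \mathrm{P}\Omega_{10}^+(q)$, so $m = 5$ and every admissible $p_i$ is a primitive prime divisor of $q^5 - 1$ with $d_i = 5$; your inequality gives only $p_3 > 125$, whereas $\Phi_5(q)$ has order of magnitude $q^4$. Nothing in your argument rules out $\Phi_5(q)$ having three prime factors $p_1 < p_2 < p_3$ (all $\equiv 1 \pmod 5$) with $p_1 \mid p_2 - 1$ and $p_2 \mid p_3 - 1$, and for each fixed $m$ the set of $q$ still awaiting your ``explicit check'' is infinite. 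Passing to the product bound $p_1 p_2 p_3 \mid \Phi_m(q)$ does not help either, since it yields only $p_1 p_2 p_3 > m^6$, again independent of $q$. The same defect appears verbatim for $\mathrm{PSL}_n(q)$ and $\mathrm{PSU}_n(q)$ with $n$ fixed and $q$ large. So the reduction to a finite residual check, on which the whole classical case rests, is illusory. (A smaller point: Lemma~\ref{lemma:simple} is stated only for odd $p$, so invoking it for $p_1$ tacitly assumes $p_1 \neq 2$, which the hypotheses of the corollary do not guarantee.)
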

\begin{proof}
This follows from Lemma~\ref{lemma:simple}. 
\end{proof}

\begin{table}[ht]
\begin{tabular}{c|c|c|c|c}\toprule[1.5pt]
Type&Group&prime&$|{\bf N}_S(P):{\bf C}_S(P)|$&Comments\\
\midrule[1.5pt]
alternating&$\mathrm{Alt}(n)$&$p$&$\frac{p-1}{2}$&$n\in \{p,p+1\}$, $p\equiv 3\pmod 4$\\
linear&$\mathrm{PSL}_2(q)$&$p\mid q$&$\frac{q-1}{2}$&$q\equiv 3\pmod 4$\\
&$\mathrm{PSL}_n(q)$&$p$&$\ell$&$p$ p.p.d. of $q^\ell-1$, $p>n$, $2\ell>n$, $\ell$ odd\\
unitary&$\mathrm{PSU}_n(q)$&$p$&$\ell$&$p$ p.p.d. of $(-q)^\ell-1$, $p>n$, $2\ell>n$, $\ell$ odd\\
orthogonal&$\mathrm{P}\Omega_{2m}^+(q)$&$p$&$m$&$p$ p.p.d. of $q^m-1$, $m$ odd\\
&$\mathrm{P}\Omega_{2m}^-(q)$&$p$&$m$&$p$ p.p.d. of $q^m+1$, $m$ odd\\
exceptional&$E_{6}^+(q)$&$p$&$9$&$p$ p.p.d. of $q^9-1$\\
&${}^2E_{6}(q)$&$p$&$9$&$p$ p.p.d. of $q^9+1$\\
sporadic&$M_{11}$&11&$ 5$&\\
&$M_{12}$&11&$5$&\\
&&11&$5$&\\
&&7&$3$&\\
&$M_{23}$&23&$11$&\\
&&11&$5$&\\
&&7&$3$&\\
&$M_{24}$&23&$11$&\\
&&7&$3$&\\
&$HS$&11&$5$&\\
&$Co_{2}$&23&$11$&\\
&$Co_{3}$&23&$11$&\\
&&11&$5$&\\
&$O'N$&31&$15$&\\
&$J_{3}$&19&$9$&\\
&$Co_{1}$&23&$11$&\\
&$Th$&31&$15$&\\
&$Fi_{23}$&23&$11$&\\
&$HN$&19&$9$&\\
&$Fi_{24}'$&23&$11$&\\
&$B$&23&$11$&\\
&&31&$15$&\\
&&47&$23$&\\
&$M$&23&$11$&\\
&&31&$15$&\\
&&47&$23$&\\
&&59&$29$&\\
&&71&$35$&\\
\bottomrule[1.5pt]
\end{tabular}
\caption{Exceptional cases arising the Lemma~$\ref{lemma:simple}$. }
\label{table1}
\end{table}

\begin{lemma}\label{lemma:lemma37}Let $G$ be a group with  ${\bf F}^\ast (G) = {\bf F} (G)$
and with $G/{\bf F} (G) \cong S_1\times \cdots \times S_\ell$, for some non-abelian simple groups $S_1,\ldots,S_\ell$ and some positive integer $\ell$.
Then $\Delta(G)$ is strongly connected of diameter at most $8$.
\end{lemma}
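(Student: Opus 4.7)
The plan is to pass to the case $\mathbf{Z}_\infty(G)=1$ and then combine the universal-attractor role of $\mathbf{F}(G)$ in $\Delta(G)$ from Lemma~\ref{lemma22DLN} with the structural restrictions supplied by Lemma~\ref{lemma35DLN} and the technical results of Lemmas~\ref{technical}, \ref{lemma:simple} and Corollary~\ref{cor:corollary}.

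If $\mathbf{Z}_\infty(G)\ne 1$, any $z\in\mathbf{Z}_\infty(G)\setminus\{1\}$ is simultaneously a right-Engel element of $G$ (a source in $\Delta(G)$) and an element of $\mathbf{F}(G)$ (an attractor by Lemma~\ref{lemma22DLN}), so $g_1\mapsto z\mapsto g_2$ for every pair, giving $\mathrm{diam}(\Delta(G))\le 2$. We may therefore assume $\mathbf{Z}_\infty(G)=1$, so that $\Delta(G)=\Gamma(G)$ and $\mathbf{Z}(G)=1$. Moreover $G$ is not a Frobenius group, since a Frobenius complement is soluble or isomorphic to $\mathrm{SL}_2(5)$, whereas $G/\mathbf{F}(G)\cong S_1\times\cdots\times S_\ell$ is neither. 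Thus the hypotheses of Lemmas~\ref{lemma35DLN}, \ref{proposition3.10} and \ref{technical} are all in force.

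By Lemma~\ref{lemma22DLN} every non-identity vertex has an arc to every $f\in\mathbf{F}(G)\setminus\{1\}$; since $\mathbf{F}(G)$ is nilpotent ($[f_1,{}_n f_2]\in\gamma_{n+1}(\mathbf{F}(G))=1$ for $n$ large), the set $\mathbf{F}(G)\setminus\{1\}$ forms a directed clique in $\Delta(G)$. If $\mathbf{C}_{\mathbf{F}(G)}(g_2)\ne 1$, then $g_1\mapsto f\mapsto g_2$ gives a path of length $2$ (for $f$ in the intersection). If some prime-order $y\in\langle g_2\rangle$ has $\mathbf{C}_{\mathbf{F}(G)}(y)\ne 1$, then $g_1\mapsto f\mapsto y\mapsto g_2$ (with $y\mapsto g_2$ by commutativity in $\langle g_2\rangle$) has length $3$. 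The analogous dichotomy applied to $g_1$ (reversing roles) disposes of all but the residual Frobenius configuration where $\langle g_1\rangle\mathbf{F}(G)$ and $\langle g_2\rangle\mathbf{F}(G)$ are both Frobenius groups with kernel $\mathbf{F}(G)$.

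In this residual case, Lemma~\ref{technical} identifies $\mathbf{N}_G(\langle g_i\rangle)/\mathbf{C}_G(g_i)$ with its counterpart in $G/\mathbf{F}(G)\cong\prod_j S_j$; Lemma~\ref{lemma:simple} and Corollary~\ref{cor:corollary} then restrict the relevant Sylow-normaliser chains inside each $S_j$ to length at most $2$. By Feit--Thompson each $|S_j|$ is even, so $G$ contains involutions, and Lemma~\ref{lemma35DLN} implies that for every involution $t\in G$ and every $x\in G\setminus\{1\}$ we have $d_{\Gamma(G)}(x,t)\le 4$ (otherwise $\mathbf{C}_G(t)$ would have to be odd, contradicting $t\in\mathbf{C}_G(t)$). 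Taking involutions $t_1,t_2\in G$ with $\mathbf{C}_{\mathbf{F}(G)}(t_i)\ne 1$ and bridges $f_i\in\mathbf{C}_{\mathbf{F}(G)}(t_i)\setminus\{1\}$, I would assemble a path of the shape
\[
g_1\ \mapsto\ f_1\ \mapsto\ t_1\ \mapsto\ \cdots\ \mapsto\ t_2\ \mapsto\ f_2\ \mapsto\ g_2
\]
whose middle segment $t_1\mapsto\cdots\mapsto t_2$ has length at most $4$, for a total of at most $1+1+4+1+1=8$. The main obstacle will be sewing the last hop $f_2\mapsto g_2$ (and its source-side analogue $g_1\mapsto f_1\mapsto\cdots$) under the fixed-point-free hypothesis $\mathbf{C}_{\mathbf{F}(G)}(g_i)=1$, which blocks the naive commuting bridge; the metacyclic-centraliser conclusion of Lemma~\ref{lemma35DLN} together with the Sylow-normaliser restriction from Corollary~\ref{cor:corollary} are the key tools for closing this last gap.
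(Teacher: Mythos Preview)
Your reduction to $\mathbf{Z}_\infty(G)=1$, the observation that $G$ is not Frobenius, and the treatment of the easy cases (where some prime-order power of $g_2$ has nontrivial centraliser in $\mathbf{F}(G)$) are all fine and match the paper's approach. The genuine gap is exactly where you flag it: the terminal hop $f_2\mapsto g_2$ in your proposed path simply does not exist in the hard case. If a prime-order power $y$ of $g_2$ acts fixed-point-freely on $\mathbf{F}(G)$, then for every $f\in\mathbf{F}(G)\setminus\{1\}$ and every $n$ one has $[f,_n y]\ne 1$ (the map $f\mapsto [f,y]$ is a bijection of $\mathbf{F}(G)$ in this Frobenius situation), so there is no arc from $\mathbf{F}(G)$ into $\langle y\rangle$. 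Saying that Lemma~\ref{lemma35DLN} and Corollary~\ref{cor:corollary} ``are the key tools for closing this last gap'' is not a proof; you have named the ingredients but not the mechanism, and your path template cannot be repaired while keeping $f_2\mapsto g_2$ as the final step.

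The missing idea, which the paper supplies, is to approach $y$ through its normaliser rather than through $\mathbf{F}(G)$. Since the (cyclic) Sylow $p$-subgroup $P=\langle y\rangle$ is not central in its normaliser (Burnside: $S$ is not $p$-nilpotent), there is $v\in\mathbf{N}_G(P)\setminus\mathbf{C}_G(P)$ of prime order $q\mid p-1$; because $v$ normalises $\langle y\rangle$ one has $[v,_2 y]=1$, giving an arc $v\mapsto y$. One then recurses to reach $v$ from $\mathbf{F}(G)$. If $q=2$, Lemma~\ref{lemma35DLN} gives $d(f,v)\le 4$, hence $d(f,y)\le 5$. If $q$ is odd, Lemma~\ref{technical} transfers the oddness of $\mathbf{N}_G(P)/\mathbf{C}_G(P)$ to the simple quotient, and Corollary~\ref{cor:corollary} then forbids a further iteration with an odd prime $r\mid q-1$; so after at most one more step the recursion terminates at an involution, yielding $d(f,y)\le 6$. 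Combining with $d(g_1,f)\le 1$ and $d(y,g_2)\le 1$ gives the bound $8$. Your sketch has the right cast of lemmas but the wrong choreography: the path must end $\cdots\mapsto v\mapsto y\mapsto g_2$, not $\cdots\mapsto f_2\mapsto g_2$.
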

\begin{proof}
Recall that the vertex set of $\Delta(G)$ is $G\setminus\{1\}$.

When ${\bf Z}_\infty(G)\ne 1$, the statement is clear because we can reach any two vertices in $\Delta(G)$ with at most two steps by pivoting on the non-identity elements of ${\bf Z}_\infty(G)$. Therefore, for the rest of the proof, we assume ${\bf Z}_\infty (G) = 1$.

From the Brauer-Suzuki theorem, there is no simple
group with generalized quaternion Sylow $2$-subgroups. In particular, by the structure of Frobenius complements in Frobenius groups, the quotient $G/{\bf F} (G)$ cannot be isomorphic to a Frobenius complement. Therefore, $G$ is not a Frobenius group. 

By Lemma~\ref{lemma22DLN}, we have 
\begin{equation}\label{eq:eq-2}
d_{\Delta(G)}(g,f)\le 1,\quad\forall g\in G\setminus\{1\} \hbox{ and }\forall f\in {\bf F}(G)\setminus\{1\}.
\end{equation}

We claim that, for any prime divisor $p$ of $|G|$, we have
\begin{equation}\label{eq:eq-3}
d_{\Delta(G)}(f,g)\le 6,\quad\forall g\in \{x\in G\mid {\bf o}(x) =p\} \hbox{ and }\forall f\in {\bf F}(G)\setminus\{1\}.
\end{equation}
We first assume $p=2$.  By Lemma~\ref{lemma35DLN}, 
\begin{equation}\label{eq:eq1}
d_{\Delta(G)}(f,g)\le 4,\quad \forall g\in \{x\in G\mid {\bf o}(x) =2\} \hbox{ and }\forall f\in G\setminus\{1\}.
\end{equation}
In particular,~\eqref{eq:eq1} implies a strong form of~\eqref{eq:eq-3} when $p=2$ because here $f$ is an arbitrary non-identity element of $G$.

Assume now that $p$ is odd. Let $P$ be a Sylow $p$-subgroup of $G$. Suppose that $P$ does not act fixed-point-freely by conjugation on ${\bf F}(G)$. Therefore, there exists $s\in P$ centralizing a non-identity element $f'\in {\bf F}(G)$. Let $z\in {\bf Z}(P)$ with $z\ne 1$ and let $x$ be an arbitrary non-identity element of $P$. Since $[f',s]=[s,z]=[z,x]=1$, we get
$d_{\Delta(G)}(f',x)\le 3$. Thus, for every $f\in {\bf F}(G)$, we have 
\begin{equation}\label{eq:eqfpf}d_{\Delta(G)}(f,x)\le d_{\Delta(G)}(f,f')+d_{\Delta(G)}(f',x)\le 1+3=4
\end{equation} and~\eqref{eq:eq-3} holds true in this case. Therefore, for the rest of the proof of~\eqref{eq:eq-3}, we suppose that $P$ does act fixed-point-freely by conjugation on ${\bf F}(G)$. Observe that, when $\ell>1$, a  Sylow $p$-subgroup of $S_1\times\cdots \times S_\ell$
cannot act fixed-point-freely on ${\bf F} (G)$ and hence we have $\ell=1$. Set $S=S_1$. Observe also that $P$ is isomorphic to a Sylow $p$-subgroup of $S$ because ${\bf F}(G)\cap S=1$.

Here, to conclude the proof of~\eqref{eq:eq-3} in this remaining case, we argue by contradiction and we suppose the claim to be wrong. In particular, we choose the smallest prime $p$ witnessing that incorrectness of our claim.  
From~\eqref{eq:eq1}, $p$ is odd. Since
$P$ acts fixed-point-freely on ${\bf F} (G)$, $P$ is cyclic, generated by $u$ say. Thus $|P|={\bf o}(u)=p^n$, for some $n\in\mathbb{N}$. Observe that Lemma~\ref{lemma35DLN} implies that ${\bf N}_G(P){\bf F}(G)$ is a Frobenius group with Frobenius kernel ${\bf F}(G)$ and Frobenius complement ${\bf N}_G(P).$  As $S$ is not $p$-nilpotent, by Burnside's
Theorem~\cite[Theorem~10.1.8]{13}, we have
$P \nleq {\bf Z}({\bf N}_G (P ))$. Therefore, there exists an element $v \in {\bf N}_G (P ) \setminus {\bf C}_G (P )$. In
particular, $v{\bf C}_G (P )$ is a non-identity element of ${\bf N}_G (P )/{\bf C}_G (P )$. Replacing $v$ by a suitable power, we may suppose that $v{\bf C}_G(P)$ has prime order $q$ and that $v$ is a $q$-element. Now, ${\bf N}_G (P )/{\bf C}_G (P )$ is isomorphic to a subgroup  of the automorphism group $ \mathrm{Aut}(P ).$ Since $P$ is cyclic of order $p^n$, $\mathrm{Aut}(P)$ has order $\varphi(p^n)=p^{n-1}(p-1)$, where $\varphi$ is Euler's totient function.
Therefore, $q={\bf o}(v{\bf C}_S (P ))$  divides $p^{ n-1} (p- 1)$ and is coprime to $p$, because $P \le {\bf C}_G (P )$. Therefore, $q$ divides $p-1$ and, for every $f\in {\bf F}(G)\setminus\{1\}$, we have
$$d_{\Delta(G)}(f,u)\le d_{\Delta(G)}(f,v)+d_{\Delta(G)}(v,u)=d_{\Delta(G)}(f,v)+1.$$
If $q=2$, then~\eqref{eq:eq1} gives $d_{\Delta(G)}(f,u)\leq 5$ and hence~\eqref{eq:eq-3} follows in this case. Therefore, we may suppose that ${\bf N}_G(P)/{\bf C}_G(P)$ has odd order. From Lemma~\ref{technical}, this implies that ${\bf N}_S(\tilde{P})/{\bf C}_S(\tilde P)$ has odd order, where $\tilde P$ is a Sylow $p$-subgroup of the simple group $S$.
Similarly, if a Sylow $q$-subgroup $Q$ of $G$ does not act fixed-point-freely on ${\bf F}(G)$, then~\eqref{eq:eqfpf} gives $d_{\Delta(G)}(f,u)\leq 5$ and hence~\eqref{eq:eq-3} follows also in this case. Therefore, we may suppose that $Q$ acts fixed-point-freely on ${\bf F}(G)$. Therefore, we may apply the argument in this paragraph, with the prime $p$ replaced by $q$. By iterating the argument, we deduce that either $d_{\Delta(G)}(f,u)\le 6$\footnote{Here the upper bound passes from $5$ to $6$, because by iterating the argument we have potentially increased by 1 the path from $f$ to $u$.} $\forall f\in {\bf F}(G)\setminus\{1\}$, or there exists an odd prime $r$ and a Sylow $r$-subgroup of $R$ of $S$ with $r\mid q-1$ and with $|{\bf N}_S(R)/{\bf C}_S(R)|$ odd. Corollary~\ref{cor:corollary} guarantees that the second alternative for $R$ cannot arise,  which yields~\eqref{eq:eq-3}. This concludes the proof of our claim~\eqref{eq:eq-3}.

Now, let $g_1,g_2\in G\setminus\{1\}$ and let $p$ be an arbitrary prime dividing ${\bf o}(g_2)$ and let $h=g_2^{{\bf o}(g_2)/p}$. Let $f\in {\bf F}(G)\setminus\{1\}$. Then, by~\eqref{eq:eq-2} and~\eqref{eq:eq-3}, we get
$$d_{\Delta(G)}(g_1,g_2)\le d_{\Delta(G)}(g_1,f)+d_{\Delta(G)}(f,g_2)\le 1+d_{\Delta(G)}(f,g_2)\le 1+d_{\Delta(G)}(f,h)+d_{\Delta(G)}(h,g_2)\le 1+6+1=8.\qedhere$$
\end{proof}

\section{Proof of Theorem~$\ref{thrm:MAIN}$}\label{sec:proof}
Let $G$ be a group and suppose that $\Gamma(G)$ is strongly connected. By~\cite[Corollary~1.4]{LS},  $G/{\bf Z}_\infty(G)$ is not isomorphic to one of groups appearing in~\eqref{type1}--\eqref{type4}.  By Lemma~\ref{lemma21}, we may
assume that ${\bf Z}_\infty (G) = 1$.  By Lemma~\ref{lemma34}, we may suppose ${\bf F}^\ast  (G) = {\bf F} (G)$.

Let $J$ and $J^\ast$ be the subgroups of $G$ containing ${\bf F}(G)$ and with
$$\frac{J}{{\bf F}(G)}={\bf F}\left(\frac{G}{{\bf F}(G)}\right)\hbox{ and }
\frac{J^\ast}{{\bf F}(G)}={\bf F}^\ast\left(\frac{G}{{\bf F}(G)}\right).
$$
From Lemma~\ref{proposition3.10}, we may suppose $J\ne J^\ast$.

 If $J = {\bf F} (G)$,
then $\Delta(J^\ast )$ is strongly connected of diameter at most $8$ by Lemma~\ref{lemma:lemma37}. Therefore, in this case, $\Gamma(G)$ is strongly
connected of diameter at most $12$ by Lemma~\ref{lemma32}. Therefore, for the rest of the proof, we may suppose that $J>{\bf F}(G)$. In particular, ${\bf F}(J)={\bf F}(G)$ and $J$ is not nilpotent.

If $J$ is not a Frobenius group, then by Lemma~\ref{proposition3.10} $\Gamma(J)$ is strongly connected of diameter at most $4$ and hence, by Lemma~\ref{lemma32}, $\Gamma(G)$ is strongly connected of diameter at most $8$.  Therefore, for the rest of the proof, we may suppose that $J$ is a Frobenius group with Frobenius kernel ${\bf F}(G)$.

 Let $P/{\bf F}(G)$ be a Sylow 2-subgroup of $J^\ast /{\bf F}(G)$ 
and notice that $G$ cannot be almost simple. 
To conclude the proof, we distinguish two cases, depending on whether the group $P$ is Frobenius  or not.

If $P$ is not Frobenius, then it follows from ~\cite[Theorem~1.3]{DLN}
that $\Delta(P)$ is strongly connected of diameter at most $4$. In particular $d_{\Gamma(G)}(x_1, x_2)\leq 4$ for every pair  $x_1, x_2$
of non-identity elements of $J$. 
By Lemma~\ref{lemma32}, $\Gamma(G)$ is strongly connected of diameter at most $8$. 

Finally assume that $P$ is a Frobenius group. As ${\bf F}(P)={\bf F}(G),$ $P$ is a Frobenius group with Frobenius kernel ${\bf F}(G)$. This implies in particular that $2$ does not divide $|{\bf F}(G)|$. Thus, ${\bf F} (G)$ has a complement $L$ in
$P$ and any element of $L$ has centralizer of even order, since $P/{\bf F} (G)$ is a
central product of a Sylow $2$-subgroup of ${\bf E}(G/{\bf F} (G))$ and a Sylow $2$-subgroup of $J/{\bf F} (G)$.  In particular if  $y\in J\setminus {\bf F}(G)$, then ${\bf C}_G(y)$ has even order and, by Lemma~\ref{lemma35DLN}, $d_{\Gamma(G)}(x, y)\leq 4$ for every $1\neq x\in J.$ On the other hand, by Lemma~\ref{lemma22DLN}, 
if  $x\in J\setminus\{1\}$ and $f\in {\bf F}(G)\setminus\{1\}$, then $(x,y)$ is an arc of $\Delta(G)$. We have so proved that $d_{\Gamma(G)}(x_1, x_2)\leq 4$ for every pair  $x_1, x_2$
of non-identity elements of $J$, but then, as in the previous case,  
we may deduce from Lemma~\ref{lemma32} that $\Gamma(G)$ is strongly connected of diameter at most $8$.

\thebibliography{40}
\bibitem{magma}  C. Bosma, J. Cannon, C. Playoust, The Magma algebra system. I. The user language, \textit{J. Symbolic Comput.} \textbf{24} (1997), 235--265.

\bibitem{BurGiu}T.~Burness, M.~Giudici, \textit{Classical groups, derangements and primes}, Australian Math. Soc. Lecture Series \textbf{25}, Cambridge University Press, 2016.

\bibitem{cam} P.~J.~Cameron,  Graphs defined on groups. \textit{Int. J. Group Theory} \textbf{11} (2022), no. 2, 53--107. 

\bibitem{atlas} J.~H.~Conway, R.~T. Curtis, S.~P.~Norton, R.~A.~Parker and R.~A.~Wilson, An $\mathbb{ATLAS}$ of Finite Groups, Oxford University Press, Eynsham, 1985.
\bibitem{DMS}F.~Dalla Volta, F.~Mastrogiacomo, P.~Spiga, On the strong connectivity of the $2$-Engel graphs of almost simple groups, \textit{J. Group Theory}, DOI 10.1515/jgth-2023-0060.
\bibitem{DLN}E.~Detomi, A.~Lucchini, D.~Nemmi, The Engel graph of a finite group, Forum Math. (2022) https://doi.org/10.1515/forum-2022-0070.
\bibitem{10}D.~Gorenstein, \textit{Finite simple groups: An introduction to their classification},
University Series in Mathematics. Plenum Publishing Corp., New York, 1982.

\bibitem{GLS3} D.~Gorenstein, R.~Lyons, R.~Solomon,  \textit{The
      classification of the finite simple groups}, Number 3.  Amer. Math. Soc.  Surveys and Monographs {\bf 40}, 3 (1998).
-
\bibitem{GNT}R.~M.~Guralnick, G.~Navarro, P.~H.~Tiep, Finite groups with odd Sylow normalizers, \textit{Proc. Amer. Math. Soc.} \textbf{144} (2016), 5129--5139.
\bibitem{5}A.~Iranmanesh, A.~Jafarzadeh, On the commuting graph associated with the
symmetric and alternating groups, \textit{J. Algebra Appl.} \textbf{7} (2008),  129--146.

\bibitem{LSS}M.~W.~Liebeck, J.~Saxl, G.~M.~Seitz, Subgroups of maximal rank in finite exceptional groups of Lie type, \textit{Proc. London Math. Soc. (3)} \textbf{65} (1992), 297--325.
\bibitem{LS}A.~Lucchini, P.~Spiga, The Engel graph of almost simple groups, \textit{Israel J. Math.}, to appear, \href{https://arxiv.org/abs/2205.14984}{ 	arXiv:2205.14984 [math.GR]}. 
\bibitem{mp} G. L. Morgan, C. W. Parker, The diameter of the commuting graph of a finite group with trivial centre, \textit{J. Algebra} \textbf{393} (2013), 41--59.

\bibitem{13}D.~J.~S.~Robinson. \textit{A course in the theory of groups}, Second edition. Graduate Texts in Mathematics, 80. Springer-Verlag, New York, 1996.
\bibitem{Williams}J.~S.~Williams, Prime graph components of  finite groups, \textit{J. Algebra} \textbf{69} (1981), 487--513.
\bibitem{XZ}C.~Xu, Y.~Zhou, Finite groups with odd Sylow automizers, \textit{Arch. Math.} \textbf{112} (2019), 567--579.

\bibitem{zi}K.~Zsigmondy, Zur Theorie der Potenzreste,
\textit{Monatsh. Math. Phys.} \textbf{3} (1892), 265--284.

\end{document}